\newcommand{\st}{\textnormal{s.t.}}
\DeclareMathOperator{\dist}{dist}
\title{
	Augmented Lagrangian Methods for the Solution of Generalized 
	Nash Equilibrium Problems
	\thanks{This research was supported by the German Research Foundation (DFG) under
		grant number KA 1296/24-1 within the priority program "Non-smooth and Complementarity-based Distributed Parameter Systems: Simulation and Hierarchical Optimization" (SPP 1962).}
}
\date{July 14, 2016}
\author{
	Christian Kanzow
	\thanks{University of W\"urzburg,
		Institute of Mathematics, Campus Hubland Nord, Emil-Fischer-Str.\ 30, 
		97074 W\"urzburg, Germany (\email{kanzow@mathematik.uni-wuerzburg.de}).}
	\and
	Daniel Steck
	\thanks{University of W\"urzburg,
		Institute of Mathematics, Campus Hubland Nord, Emil-Fischer-Str.\ 30, 
		97074 W\"urzburg, Germany (\email{daniel.steck@mathematik.uni-wuerzburg.de}).}
}
\begin{document}

\maketitle

\begin{abstract}
\noindent
We propose an augmented Lagrangian-type algorithm for the solution of 
generalized Nash equilibrium problems (GNEPs). Specifically, we discuss 
the convergence properties with regard to both feasibility and optimality 
of limit points. This is done by introducing a secondary GNEP as a new
optimality concept. In this context, special consideration is given to the 
role of suitable constraint qualifications that take into account the
particular structure of GNEPs. Furthermore, we consider the behaviour of 
the method for jointly-convex GNEPs and describe a modification which is 
tailored towards the computation of variational equilibria. Numerical
results are included to illustrate the practical performance of the overall
method.
\end{abstract}

\begin{keywords}
	Nash equilibrium problem, Generalized Nash equilibrium problem, Jointly-convex problem, Augmented Lagrangian method, Global convergence.
\end{keywords}

\begin{AMS}
	65K10, 90C33, 91A10.
\end{AMS}

\section{Introduction}

We consider the generalized Nash equilibrium problem which consists of 
$ N $ players, where each player $ \nu = 1, \ldots, N $ tries to solve
his optimization problem 
\begin{equation}\label{Eq:SimpleGNEP}
   \min_{x^{\nu}}\ \theta_{\nu}(x) \quad\st\quad c^{\nu} (x) \le 0,
\end{equation}
where $ \theta_{\nu} : \mathbb R^n \to \mathbb R $ denotes the objective
or utility function of player $ \nu $, $ c^{\nu}: \mathbb R^n \to
\mathbb R^{r_{\nu}} $ defines the constraints, and the vector 
$ x $ consists of the block components $ x^{\nu} \in \mathbb R^{n_{\nu}}, 
\nu = 1, \ldots, N $. These block vectors $ x^{\nu} $ denote the 
variables of player $ \nu $, and we subsume the remaining blocks
into the subvector $ x^{- \nu} $, and then sometimes write 
$ x = (x^{\nu}, x^{-\nu}) $ to indicate the importance of the block vector
$ x^{\nu} $ within the whole vector $ x $. Note that we have
$ n = n_1 + \ldots + n_N $; furthermore, we set $ r := r_1 + \ldots + r_N $
for the total number of constraints. The GNEP is called {\em player-convex}
if all functions $ \theta_{\nu} (\cdot, x^{-\nu}) $ and $ c_i^{\nu} (\cdot,
x^{-\nu} ) $ are convex for any given $ x^{-\nu} $, whereas the GNEP
is called {\em jointly-convex} if, again, the utility functions 
$ \theta_{\nu} $ are convex as a mapping of $ x^{\nu} $ and the
constraints coincide for all players, i.e.\ $ c^1 = \ldots = c^N =: c $, 
and $ c $ is convex as a function of the entire vector $ x $. Note that
the GNEP reduces to the standard {\em Nash equilibrium problem} (NEP) in 
the special case where $ c^{\nu} $ depends on the subvector $ x^{\nu} $
only.

Using this notation, we recall that $ \bar x = \big( \bar x^1, \ldots, 
\bar x^N \big) $ is a {\em (generalized) Nash equilibrium} or simply
a {\em solution} of the GNEP if $ \bar x $ satisfies all the constraints
and, in addition, for each player $ \nu = 1, \ldots, N $, it holds that
\begin{equation*}
   \theta_{\nu} (\bar x) \leq \theta_{\nu} (x^{\nu}, \bar x^{-\nu})
   \quad \forall x^{\nu}: c^{\nu} (x^{\nu}, \bar x^{-\nu}) \leq 0,
\end{equation*}
i.e., $ \bar x $ is a solution if and only if no player $ \nu $ can
improve his situation by unilaterally changing his strategy.

Note that we do not include equality constraints in our GNEP simply
for the sake of notational convenience; our subsequent approach can
easily be extended to equality and inequality constraints. Apart from
this, the above setting is very general since, so far, we do not assume
any convexity assumptions on the mappings $ \theta_{\nu} $ and $ c^{\nu} $
as is done in many other GNEP papers where only the player-convex
or jointly-convex case is considered, cf.\ \cite{Bensoussan1974,Dreves2012,
	Dreves2011,Facchinei2007,Facchinei2010,Fischer2014,Heusinger2009}
for more details. It follows that our framework can, in principle, be
applied to very general classes of GNEPs.

In the meantime, there exist a variety of methods for the solution
of GNEPs, though most of them are designed for player- or
jointly-convex GNEPs and therefore do not cover the GNEP in its
full generality. We refer the interested reader once again to the two
survey papers \cite{Facchinei2010,Fischer2014} and the references
therein for a quite complete overview of the existing approaches.
One of the main problems when solving a GNEP is an inherent singularity
property that arises when some players share the same constraints,
see \cite{Facchinei2009} for more details. Hence, second-order methods
with fast local convergence are difficult to design. This also motivates
us to consider methods which may not be locally superlinearly or 
quadratically convergent, but have nice global convergence properties.

Penalty-type schemes belong to this class of methods. The first
penalty method for GNEPs that we are aware of is due to Fukushima
\cite{Fukushima2011}. A related penalty algorithm was also proposed in
\cite{Facchinei2010a}, and a modification of this algorithm is described in 
\cite{Facchinei2011} where only some of the constraints are penalized. While 
all these approaches prove exactness results under suitable assumptions,
they suffer from the drawback that the resulting penalized subproblems are
nonsmooth Nash equilibrium problems and therefore difficult to solve numerically.

Taking this into account, it is natural to apply an augmented Lagrangian-type
approach in order to solve GNEPs because the resulting subproblems then
have a higher degree of smoothness and should therefore be easier to solve.
This idea is not completely new since Pang and Fukushima \cite{Pang2005} 
applied this idea to quasi-variational inequalities (QVIs). An
improved version of that method can be found in \cite{Kanzow2016}, also
for QVIs. Since the GNEP is a special instance of a QVI, these two
papers also discuss the GNEP within their general QVI-framework. Here,
we apply the augmented Lagrangian idea directly to GNEPs. It turns out that
the corresponding results are significantly stronger than or simply different
from those that arise from the QVI-framework in \cite{Kanzow2016,Pang2005}.
In particular, no GNEP-tailored constraint qualifications are 
considered in \cite{Kanzow2016,Pang2005}, and the feasibility issue,
which plays a central role in this paper, is not discussed there as
a separate topic.

Recall that the augmented Lagrangian (or multiplier-penalty) method is one
of the traditional methods for the solution of constrained optimization 
problems \cite{Bertsekas1982,Nocedal2006} which have also been the subject
of some recent research with several improved convergence results, see, e.g.,
\cite{Birgin2014} and references therein. We therefore try to adapt these
recent improvements to GNEPs in order to get a better understanding
of the augmented Lagrangian approach applied to GNEPs. It turns out,
however, that some results are different from those that are known
for standard optimization problems.

This paper is organized as follows. In \Cref{Sec:CQs}, we deal with 
GNEP-tailored constraint qualifications (CQs), prove some basic results and 
present an error bound as an application. \Cref{Sec:Algorithm} contains 
a precise statement of our algorithm; starting with that section, we divide 
the constraint functions $ c^{\nu} $ into two parts and penalize only one of 
these two parts within our (partial) augmented Lagrangian approach. Hence, 
we consider a whole class of methods which is quite flexible and can take
into account the special structure of the underlying GNEP in a very
favourable way. \Cref{Sec:Convergence} is then dedicated to a 
thorough convergence analysis. To this end, we consider both the feasibility 
and optimality of limit points of our algorithm; in particular, we introduce 
a secondary GNEP called {\em Feasibility GNEP} as a new optimality concept
for generalized Nash games which may be viewed as an interesting counterpart 
of a feasibility result for limit points in the optimization framework, see
\cite{Birgin2014}. In \Cref{Sec:Variational}, we describe how to modify 
our algorithm in a way that is tailored to the computation of variational 
equilibria for jointly-convex GNEPs, and state corresponding convergence 
theorems. \Cref{Sec:Numerics} presents some numerical results, and we 
conclude with some final remarks in \Cref{Sec:Final}.

Notation: Given a function $f=f(x)$ of suitable dimension, we denote by 
$\nabla f$ the transposed Jacobian of $f$. If $x^{\nu}$ is a given subvector 
of $x$, then $\nabla_{x^{\nu}}f$ denotes the submatrix of $\nabla f$ which 
corresponds to the components $x^{\nu}$. Furthermore, given a scalar
$ \alpha $, we write $ \alpha_+ $ for $ \max \{ 0, \alpha \} $. Similarly,
given a vector $v$, we write $ v_+ $ for the vector where the plus-operator
is applied component-wise. When dealing with a function, we occasionally
also write $f_+(x)=(f(x))_+$. All vector norms without an index are
Euclidean norms; the induced matrix norm is denoted by the same symbol.

\section{GNEP Constraint Qualifications}\label{Sec:CQs}

This section is dedicated to an analysis of constraint qualifications for 
GNEPs and their properties. Before we do so, we first recall the definition
of a KKT point.

\begin{definition}\label{Dfn:KKT}
	A pair $(x,\lambda)\in\mathbb{R}^{n+r}$ is called a KKT point
	of the GNEP \eqref{Eq:SimpleGNEP} if
	\begin{equation*}
		\nabla_{x^{\nu}}\theta_{\nu}(x) + \nabla_{x^{\nu}}c^{\nu}(x)\lambda^{\nu}=0
		\quad \text{and} \quad \min\{-c^{\nu}(x),\lambda^{\nu}\}=0
	\end{equation*}
	for every $\nu$. We call $x$ a KKT point if $(x,\lambda)$ is a KKT point for some $\lambda\in\mathbb{R}^m$.
\end{definition}

\noindent
Note that $\min\{-c^{\nu}(x),\lambda^{\nu}\} =0 $ is equivalent to 
$c^{\nu}(x)\le 0$, $\lambda^{\nu}\ge 0$ and $c^{\nu}(x)^T\lambda^{\nu}=0$.

In the theory of augmented Lagrangian methods for optimization problems, two
constraint qualifications have proven to be particularly important: the (extended)
Mangasarian-Fromovitz constraint qualification and the constant positive linear
dependence condition (see \cite{Birgin2014,Qi2000}). Here, we present suitable 
extensions of these conditions to the GNEP setting.

\subsection{Constraint Qualifications}

Recall that we have a GNEP of the form \eqref{Eq:SimpleGNEP}. The first 
condition we present is a GNEP-tailored version of CPLD. Note that we 
call a collection of vectors $v_1,\ldots,v_k$ \emph{positively linearly 
dependent} if the system $ \lambda_1 v_1 + \ldots + \lambda_k v_k=0, \
\lambda_i\ge 0 $, has a nontrivial solution. Otherwise, the vectors are called 
\emph{positively linearly independent}.

\begin{definition}\label{Dfn:CPLD}
Consider a GNEP of the form \eqref{Eq:SimpleGNEP}. Let $\nu$ be a given 
index and $x\in\mathbb{R}^n$ be a given point with $c^{\nu}(x)\le 0$. We 
say that $c^{\nu}$ satisfies {\em CPLD with respect to player $\nu$} or simply 
{\em CPLD$_{\nu}$} if, whenever the partial gradients 
$ \nabla_{x^{\nu}}c_i^{\nu}(x) \ ( i\in I ) $
are positively linearly dependent for some subset
$ I\subset\{i\in\{1,\ldots, r_{\nu} \}~|~c_i^{\nu}(x)=0\} $,
the same gradients are linearly dependent in a neighbourhood of $x$. 
Moreover, we say that the GNEP \eqref{Eq:SimpleGNEP} satisfies {\em GNEP-CPLD}
in $x$ if, for every $\nu\in\{1,\ldots,N\}$, the function $c^{\nu}$ 
satisfies CPLD$_{\nu}$ in $x$.
\end{definition}

\noindent
In the simplest case $N=1$ (i.e.\ there is only one player), the above reduces
to the classical CPLD, cf.\ \cite{Qi2000}. Hence, one might consider GNEP-CPLD as
a straightforward generalization of CPLD to the multi-player setting. However, there
are some peculiarities that need to be pointed out. Clearly, the above condition
only makes an assertion about the partial gradients with regard to the respective
player's variable $x^{\nu}$. However, we require that the positive linear
dependence (if there is one) extends to a whole neighbourhood of $x$. This
makes \cref{Dfn:CPLD} a condition which should not be attributed
to each player $\nu$ but rather to the GNEP as a whole.

We now define an analogue of the extended MFCQ. Here, we do not require 
the point $x$ to be feasible, hence the term \emph{extended} MFCQ.

\begin{definition}\label{Dfn:EMFCQ}
Consider a GNEP of the form \eqref{Eq:SimpleGNEP}. Let $\nu$ be a given 
index and $x\in\mathbb{R}^n$ be a given point. We say that $c^{\nu}$ 
satisfies {\em EMFCQ with respect to player $\nu$} or simply 
{\em EMFCQ$_{\nu}$} if there is a vector $d^{\nu}\in\mathbb{R}^{n_{\nu}}$ 
such that
\begin{equation*}
   c_i^{\nu}(x)\ge 0 \implies \nabla_{x^{\nu}}c_i^{\nu}(x)^T d^{\nu}<0
\end{equation*}
holds for every $i\in\{1,\ldots,r_{\nu}\}$. Moreover, we say that the 
GNEP \eqref{Eq:SimpleGNEP} satisfies {\em GNEP-EMFCQ} in $x$ if, for every 
$\nu\in\{1,\ldots,N\}$, the function $c^{\nu}$ satisfies EMFCQ$_{\nu}$ in $x$.
\end{definition}

\noindent
While GNEP-CPLD seems to be a new constraint qualification for GNEPs,
the GNEP-EMFCQ condition is already used in \cite{Facchinei2010a,Fukushima2011} 
to prove exactness results for suitable penalty methods; apart from this,
these references do not contain any further discussion of GNEP-EMFCQ.
Since both constraint qualifications play a central role in our
subsequent analysis, we therefore discuss their main properties in
this section.

To this end, first note that EMFCQ boils down to the classical MFCQ condition
in case of feasible points $ x $. Hence, when dealing with feasible points,
we will sometimes simply write GNEP-MFCQ instead of GNEP-EMFCQ. By use of a 
classical theorem of the alternative, it is easy to see that \cref{Dfn:EMFCQ}
can equivalently be stated as the gradients $\nabla_{x^{\nu}}c_i^{\nu}(x)\ 
(c_i^{\nu}(x)\ge 0) $ being positively linearly independent. This immediately
shows that GNEP-MFCQ (for feasible points) implies GNEP-CPLD.

Clearly, the above two CQs are conditions which are tailored to GNEPs. 
However, it is not immediately clear whether there is a relationship between 
the "classical" constraint qualifications and their GNEP counterparts. 
In fact, one could simply concatenate the player constraints $c^{\nu}$ 
into one mapping
\begin{equation}\label{Eq:ConcatenatedConstraints}
   c(x)=
   \begin{pmatrix}
      c^1(x) \\
      \vdots \\
      c^N(x)
   \end{pmatrix}
\end{equation}
and ask whether we can reduce GNEP constraint qualifications to conditions 
for this function. In general, however, this is not possible. To this end, 
consider the following set of examples.

\begin{example}
In both examples, we have two players $ \nu = 1, 2 $ with $ n_1 = n_2 := 1 $,
and the mapping $ c $ is defined by \eqref{Eq:ConcatenatedConstraints}
with $ r_1 = r_2 := 1 $. To simplify the notation, we write $ c_1 $ and 
$ c_2 $ instead of $ c_1^1 $ and $ c_1^2 $, respectively, for the two
components of $ c $.
\begin{enumerate}[label=\textnormal{(\alph*)}]
   \item Consider the function
      \begin{equation*}
         c(x_1,x_2)=
	 \begin{pmatrix}
	    x_1 \\
	    x_1 + x_2^2
	 \end{pmatrix}
      \end{equation*}
      and the point $\bar{x}=(0,0)$. Using $d=(-1,0)$, it follows 
      that $\nabla c_1(\bar{x})^T d<0$ and 
      $\nabla c_2(\bar{x})^T d<0$. Hence, standard EMFCQ holds for 
      this constraint. However, we have $\nabla_{x_2}c_2(\bar{x})=0$, 
      which means that EMFCQ$_2$ cannot hold. In fact, even CPLD$_2$ is 
      not satisfied since $\nabla_{x_2}c_2(x)=2 x_2$ for all $x\in\mathbb{R}^2$.
   \item Consider the function
      \begin{equation*}
         c(x_1,x_2)=
         \begin{pmatrix}
            2 x_1 - x_2^2 - 1 \\
            2 x_2 - x_1^2 - 1
         \end{pmatrix}
      \end{equation*}
      and the point $\bar{x}=(1,1)$. Due to $\nabla_{x_1}c_1(x)=
      \nabla_{x_2}c_2(x)=2$, it is clear that GNEP-EMFCQ holds in 
      $\bar{x}$. On the other hand, the gradients of $c$ are given by
      \begin{equation*}
         \nabla c_1(x)=
         \begin{pmatrix}
            2 \\
            -2 x_2
         \end{pmatrix},\quad
         \nabla c_2(x)=
         \begin{pmatrix}
            -2 x_1 \\
            2
         \end{pmatrix}.
      \end{equation*}
      This shows that $c$ satisfies neither EMFCQ nor CPLD in $\bar{x}$.
\end{enumerate}
\end{example}

\noindent
These examples show that, in general, the classical CPLD and EMFCQ are 
entirely different conditions in comparison to their GNEP counterparts. 
There is, however, an important special case which arises if the functions 
$c^{\nu}$ depend on $x^{\nu}$ only, so we have a standard NEP. In this case, 
the transposed Jacobian $\nabla c(x)$ is a block diagonal matrix of the form
\begin{equation}\label{Eq:ConcatenatedConstraintsGrad}
   \nabla c(x)=
   \begin{pmatrix}
      \nabla_{x^1} c^1(x^1) & & \\
      & \ddots & \\
      & & \nabla_{x^{N}}c^N(x^N) \\
   \end{pmatrix}
   \quad \text{with} \quad
   \nabla_{x^{\nu}} c^{\nu} (x^{\nu}) \in \mathbb R^{n_{\nu} \times r_{\nu}} .
\end{equation}
This makes it easy to prove that GNEP-CPLD is equivalent to CPLD 
(for the function $c$), and the same holds with CPLD replaced by EMFCQ.

\begin{theorem}
Consider a standard NEP of the form \eqref{Eq:SimpleGNEP} with $C^1$-functions $\theta_{\nu}$ and
$c^{\nu}$. If $\bar{x}\in\mathbb{R}^n$ is a given point, then the following assertions are true:
\begin{enumerate}[label=\textnormal{(\alph*)}]
   \item If $\bar{x}$ is feasible, then GNEP-CPLD holds in $\bar{x}$
      iff $c$ satisfies CPLD in $\bar{x}$.
   \item GNEP-EMFCQ holds in $\bar{x}$ iff $c$ satisfies 
      EMFCQ in $\bar{x}$.
\end{enumerate}
\end{theorem}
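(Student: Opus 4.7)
The plan is to exploit the block-diagonal structure in \eqref{Eq:ConcatenatedConstraintsGrad} and reduce everything to a bookkeeping argument. The key structural observation is that, in the standard-NEP case, $\nabla c_i^{\nu}(x)\in\mathbb{R}^n$ vanishes outside the $x^{\nu}$-block and coincides with $\nabla_{x^{\nu}}c_i^{\nu}(x^{\nu})$ on that block. Consequently, a linear combination $\sum_{(\nu,i)\in J}\mu_i^{\nu}\nabla c_i^{\nu}(x)=0$ is equivalent to the decoupled system $\sum_{i:(\nu,i)\in J}\mu_i^{\nu}\nabla_{x^{\nu}}c_i^{\nu}(x^{\nu})=0$ for every $\nu$, and for any $d=(d^1,\ldots,d^N)$ one has $\nabla c_i^{\nu}(x)^T d=\nabla_{x^{\nu}}c_i^{\nu}(x^{\nu})^T d^{\nu}$. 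Note also that at a feasible $\bar x$ the active set of $c$ is the disjoint union of the active sets of the $c^{\nu}$, so the index sets in both CQs align.

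Part (b) then follows immediately. For the forward direction I would pick per-player EMFCQ directions $d^{\nu}$ and assemble $d=(d^1,\ldots,d^N)$; the inner-product identity above gives $\nabla c_i^{\nu}(\bar x)^T d<0$ whenever $c_i^{\nu}(\bar x)\ge 0$. For the reverse direction I would split an EMFCQ direction for $c$ into its blocks and note that the same identity makes each $d^{\nu}$ an EMFCQ$_{\nu}$ direction.

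Part (a) takes a bit more care but uses the same decomposition twice. For the forward implication, assume $c$ satisfies CPLD at $\bar x$; given any $\nu$ and any subset $I$ of active indices for $c^{\nu}$ at which the partial gradients are positively linearly dependent, I would pad the nonnegative coefficients with zeros on the remaining blocks to obtain a positive linear dependence of the full gradients indexed by a subset of the active set of $c$. Classical CPLD yields linear dependence of these full gradients on a neighborhood of $\bar x$, which by the decoupling observation forces linear dependence of the partial gradients $\{\nabla_{x^{\nu}}c_i^{\nu}(y^{\nu})\}_{i\in I}$ throughout the same neighborhood. For the reverse direction, suppose GNEP-CPLD holds and $\sum_{(\nu,i)\in J}\mu_i^{\nu}\nabla c_i^{\nu}(\bar x)=0$ with nontrivial $\mu\ge 0$ on a subset $J$ of active indices of $c$; decoupling shows that on at least one block $\nu_0$ the coefficients $\mu^{\nu_0}$ are nonzero and yield a positive linear dependence of $\{\nabla_{x^{\nu_0}}c_i^{\nu_0}(\bar x^{\nu_0})\}_{i\in I_{\nu_0}}$, and CPLD$_{\nu_0}$ then provides linear dependence of these partial gradients in a neighborhood, which I again pad with zeros to conclude.

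The only subtlety I foresee is not conflating \emph{positive} linear dependence (the hypothesis) with \emph{linear} dependence (the conclusion), and observing that GNEP-CPLD only needs to be invoked on the single block $\nu_0$ that carries the nontrivial part of the nonnegative combination — the other blocks are handled for free by zero-padding. Beyond this, the argument is entirely routine block-wise bookkeeping.
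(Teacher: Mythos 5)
Your proof is correct and is precisely the block-diagonal bookkeeping argument the paper has in mind: its proof consists of the single remark that the claim ``is based on \eqref{Eq:ConcatenatedConstraintsGrad} and is rather straightforward,'' and your decoupling identities $\nabla c_i^{\nu}(x)^T d=\nabla_{x^{\nu}}c_i^{\nu}(x^{\nu})^T d^{\nu}$ and the zero-padding of coefficient vectors are exactly the details being suppressed. You also correctly isolate the two points where care is needed (positive versus plain linear dependence in CPLD, and the alignment of active sets at a feasible point), so nothing is missing.
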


\begin{proof}
	The proof is based on \eqref{Eq:ConcatenatedConstraintsGrad} and is rather straightforward.
\end{proof}

\noindent
We now prove two theorems which establish the role of GNEP-CPLD and 
GNEP-EMFCQ as constraint qualifications. These theorems play a fundamental 
role in our analysis and will be referenced multiple times later on. 
It should be noted, however, that the proofs are obtained by suitable 
adaptations of the corresponding proofs for classical optimization problems.

\begin{theorem}\label{Thm:SequentialCPLDnu}
Consider a GNEP of the form \eqref{Eq:SimpleGNEP} where $\theta_{\nu}$ 
and $c^{\nu}$ are $C^1$-functions. Let $(x^k)\subset\mathbb{R}^n$ be a 
sequence converging to $\bar{x}$ and $(\lambda^{\nu,k})\subset
\mathbb{R}^{r_{\nu}}$ be vectors with
\begin{equation}\label{Eq:ApproxKKT}
   \nabla_{x^{\nu}}\theta_{\nu}(x^k) + \nabla_{x^{\nu}}c^{\nu}(x^k)\lambda^{\nu,k}
   \to 0 \quad \text{and} \quad
   \min\{-c^{\nu}(x^k),\lambda^{\nu,k}\}\to 0
\end{equation}
for every $\nu$. If GNEP-CPLD holds in $\bar{x}$, then 
$\bar{x}$ together with some multiplier $\bar{\lambda}$ 
is a KKT point of the GNEP.
\end{theorem}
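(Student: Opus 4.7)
The strategy is to prove the conclusion one player at a time: fix $\nu\in\{1,\ldots,N\}$ and construct a multiplier $\bar{\lambda}^{\nu}\ge 0$ satisfying the stationarity and complementarity conditions of \cref{Dfn:KKT} at $\bar{x}$. This is a block-wise adaptation of the Qi--Wang style proof of CPLD in standard nonlinear programming. First I would use the complementarity approximation in \eqref{Eq:ApproxKKT} to argue that $\lambda_i^{\nu,k}\to 0$ whenever $c_i^{\nu}(\bar{x})<0$ and that the negative parts of the remaining entries vanish, so only the active set $I_{\nu}:=\{i\mid c_i^{\nu}(\bar{x})=0\}$ matters and the multipliers on $I_{\nu}$ may be taken nonnegative. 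The stationarity approximation then reduces to
\begin{equation*}
   \nabla_{x^{\nu}}\theta_{\nu}(x^k) + \sum_{i\in I_{\nu}}\lambda_i^{\nu,k}\nabla_{x^{\nu}}c_i^{\nu}(x^k)\to 0.
\end{equation*}

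Next I would apply Carath\'eodory's lemma on conic combinations to replace, for each $k$, the vector $(\lambda_i^{\nu,k})_{i\in I_{\nu}}$ by nonnegative coefficients $(\mu_i^{\nu,k})_{i\in J_k}$ with $J_k\subseteq I_{\nu}$, preserving the above sum and ensuring that $\{\nabla_{x^{\nu}}c_i^{\nu}(x^k)\mid i\in J_k\}$ is linearly independent. Since $I_{\nu}$ has only finitely many subsets, a subsequence makes $J_k\equiv J$ constant. Now I split into the usual dichotomy: if $(\mu^{\nu,k})$ is bounded, I extract a convergent subsequence and extend its limit by zero outside $J$ to obtain $\bar{\lambda}^{\nu}$, with the KKT conditions following by continuity of $\nabla_{x^{\nu}}\theta_{\nu}$ and $\nabla_{x^{\nu}}c^{\nu}$. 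Otherwise $\|\mu^{\nu,k}\|\to\infty$; dividing the reduced stationarity by this norm and passing to the limit produces a nontrivial nonnegative combination $\sum_{i\in J}\hat{\mu}_i^{\nu}\nabla_{x^{\nu}}c_i^{\nu}(\bar{x})=0$. CPLD$_{\nu}$ then forces the family $\{\nabla_{x^{\nu}}c_i^{\nu}(\cdot)\mid i\in J\}$ to be linearly dependent throughout a neighbourhood of $\bar{x}$, contradicting the linear independence at $x^k$ obtained from the Carath\'eodory reduction. Hence only the bounded case can occur, and the multiplier $\bar{\lambda}^{\nu}$ exists.

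The main obstacle is largely bookkeeping rather than a genuinely new difficulty: one must carefully route the entire argument through the \emph{partial} gradients $\nabla_{x^{\nu}}c_i^{\nu}$ (never the full Jacobian of the concatenated $c$), which is precisely the form of positive linear dependence encoded in \cref{Dfn:CPLD}. Since GNEP-CPLD at $\bar{x}$ supplies CPLD$_{\nu}$ for every $\nu$, the construction can be performed independently for each player, and concatenating the resulting multipliers $\bar{\lambda}^1,\ldots,\bar{\lambda}^N$ yields the desired KKT pair $(\bar{x},\bar{\lambda})$.
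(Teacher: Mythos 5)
Your proposal is correct and follows essentially the same route as the paper's proof: reduction to the active set with nonnegative multipliers, the Carath\'eodory-type conic reduction to linearly independent partial gradients $\nabla_{x^{\nu}}c_i^{\nu}(x^k)$, subsequencing to a fixed index set, and the boundedness argument in which an unbounded multiplier sequence yields a vanishing nontrivial nonnegative combination at $\bar{x}$, whence CPLD$_{\nu}$ contradicts the linear independence at $x^k$. Your handling of possibly negative multiplier entries (showing their negative parts vanish) is equivalent to the paper's device of replacing $\lambda^{\nu,k}$ by $\lambda^{\nu,k}_+$.
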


\begin{proof}
Let $\nu\in\{1,\ldots,N\}$. Since the relations \eqref{Eq:ApproxKKT}
remain true if we replace $ \lambda^{\nu,k} $ by $ \lambda^{\nu,k}_+ $,
we may assume, without loss of generality, that $\lambda^{\nu,k}\ge 0$ for 
all $k$. Furthermore, we have $\lambda_i^{\nu,k}\to 0$ for every $i$ with 
$c_i^{\nu}(\bar{x})<0$. Hence, we get
\begin{equation*}
   \nabla_{x^{\nu}}\theta_{\nu}(x^k) + \sum_{c_i^{\nu}(\bar{x})=0}
   \lambda_i^{\nu,k}\nabla_{x^{\nu}}c_i^{\nu}(x^k)\to 0.
\end{equation*}
Using a Carath\'eodory-type result, cf.\ \cite[Lem.\ 3.1]{Birgin2014}, we 
can choose subsets
\begin{equation*}
   I^{\nu,k}\subset\{i~|~c_i^{\nu}(\bar{x})=0\}
\end{equation*}
such that the gradients $\nabla_{x^{\nu}}c_i(x^k) \ (i\in I^{\nu,k})$ are 
linearly independent and we can write
\begin{equation*}
   \sum_{c_i^{\nu}(\bar{x})=0}\lambda_i^{\nu,k}\nabla_{x^{\nu}}c_i^{\nu}(x^k)=
   \sum_{i\in I^{\nu,k}}\hat{\lambda}_i^{\nu,k}\nabla_{x^{\nu}}c_i^{\nu}(x^k)
\end{equation*}
for some vectors $\hat{\lambda}^{\nu,k}\ge 0$. Subsequencing if necessary, 
we may assume that $I^{\nu,k}=I^{\nu}$ for every $k$, i.e.\ we get
\begin{equation}\label{Eq:Using0}
   \nabla_{x^{\nu}}\theta_{\nu}(x^k) + \sum_{i\in I^{\nu}}
   \hat{\lambda}_i^{\nu,k}\nabla_{x^{\nu}}c_i^{\nu}(x^k)\to 0.
\end{equation}
We claim that the sequence $(\hat{\lambda}^{\nu,k})$ is bounded. 
If this is not the case, then we can divide both sides of the 
above equation by $\|\hat{\lambda}^{\nu,k}\|$, take the limit $k\to\infty$ 
on a suitable subsequence and obtain a nontrivial positive linear 
combination of the gradients $\nabla_{x^{\nu}}c_i(\bar{x})$, $i\in I^{\nu}$, 
which vanishes. Hence, by CPLD, these gradients should be linearly dependent 
in a neighbourhood of $\bar{x}$, which is a contradiction.

Hence $(\hat{\lambda}^{\nu,k})$ is bounded; let 
$ \bar{\lambda}_i^{\nu} \ (i \in I^{\nu}) $ be a limit point.
Setting $ \bar{\lambda}_i^{\nu} := 0 $ for all $ i \not\in I^{\nu} $, and
taking into account \eqref{Eq:Using0}, it follows that $ \bar{x} $
together with the multiplier $ \bar{\lambda}^{\nu} $ satisfies the
KKT conditions of player $ \nu $. Since $ \nu \in \{ 1, \ldots, N \} $
was chosen arbitrarily, the statement follows.
\end{proof}

\noindent
Note that assumption \eqref{Eq:ApproxKKT} means that $ x^k $, together
with some multiplier estimate $ \lambda^{\nu,k} $, satisfies the 
KKT conditions of player $ \nu $ inexactly. In contrast to the
approximate KKT conditions used in \cite{Birgin2014} (also applied
in \cite{Kanzow2016}), however, we do not assume that the multiplier
estimates are nonnegative which gives some more freedom in our choice
of methods for computing approximate KKT points. Furthermore, let us
mention explicitly that \eqref{Eq:ApproxKKT} automatically implies that
any limit point of the sequence $ (x^k) $ is at least feasible for the
GNEP \eqref{Eq:SimpleGNEP}.

We also stress that, as is usually the case with CPLD-type conditions, 
the $\nu$-th component of the vector $\bar{\lambda}$ is not 
necessarily a limit point of the sequence $(\lambda^{\nu,k})$. 
This property is, in general, only true if we assume a stronger 
constraint qualification. To this end, consider the following theorem 
which uses GNEP-MFCQ (recall the feasibility of the limit points, hence
there is no need to assume GNEP-EMFCQ).

\begin{theorem}\label{Thm:SequentialMFCQnu}
Consider a GNEP of the form \eqref{Eq:SimpleGNEP} where $\theta_{\nu}$ and
$c^{\nu}$ are $C^1$-functions. Let $(x^k)\subset\mathbb{R}^n$ be a sequence
converging  to $\bar{x}$ and $(\lambda^{\nu,k})\subset\mathbb{R}^{r_{\nu}}$
be vectors such that \eqref{Eq:ApproxKKT} holds for every $\nu$. If GNEP-MFCQ
holds in $\bar{x}$, then the sequences $(\lambda^{\nu,k})$ are bounded.
Moreover, if $\bar{\lambda}^{\nu}$ is a limit point of $(\lambda^{\nu,k})$
for every $\nu$, then $\bar{x}$ together with $\bar{\lambda}=
(\bar{\lambda}^1,\ldots,\bar{\lambda}^N)$ is a KKT point of the GNEP.
\end{theorem}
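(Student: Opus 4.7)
The plan is to adapt the standard MFCQ-based boundedness argument from classical nonlinear programming to the player-wise setting, treating each player $\nu$ independently since GNEP-MFCQ decouples along players.

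First, as in the proof of \Cref{Thm:SequentialCPLDnu}, I would reduce to the case $\lambda^{\nu,k}\ge 0$ by replacing each $\lambda^{\nu,k}$ with its positive part, which preserves \eqref{Eq:ApproxKKT}. Then I would use the complementarity relation $\min\{-c^{\nu}(x^k),\lambda^{\nu,k}\}\to 0$ to conclude both that $\bar x$ is feasible (so GNEP-MFCQ, not just GNEP-EMFCQ, makes sense at $\bar x$) and that $\lambda_i^{\nu,k}\to 0$ whenever $c_i^{\nu}(\bar x)<0$. This reduces the approximate stationarity condition to
\begin{equation*}
\nabla_{x^{\nu}}\theta_{\nu}(x^k) + \sum_{c_i^{\nu}(\bar x)=0}\lambda_i^{\nu,k}\nabla_{x^{\nu}}c_i^{\nu}(x^k)\to 0.
\end{equation*}

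Next, to prove boundedness, I would argue by contradiction and suppose that $\|\lambda^{\nu,k}\|\to\infty$ along some subsequence for a fixed $\nu$. Dividing the displayed relation by $\|\lambda^{\nu,k}\|$, the gradient-of-objective term vanishes in the limit, and after passing to a further subsequence so that $\lambda^{\nu,k}/\|\lambda^{\nu,k}\|\to \mu^{\nu}$ with $\mu^{\nu}\ge 0$, $\|\mu^{\nu}\|=1$, and $\mu_i^{\nu}=0$ whenever $c_i^{\nu}(\bar x)<0$, I would obtain
\begin{equation*}
\sum_{i:\,c_i^{\nu}(\bar x)=0}\mu_i^{\nu}\nabla_{x^{\nu}}c_i^{\nu}(\bar x)=0.
\end{equation*}
Then, taking the MFCQ direction $d^{\nu}$ guaranteed by EMFCQ$_{\nu}$ at $\bar x$ (which here reduces to ordinary MFCQ$_{\nu}$ since $\bar x$ is feasible) and contracting with it gives a sum of strictly negative terms with nonnegative weights, contradicting $\|\mu^{\nu}\|=1$. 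This establishes boundedness of $(\lambda^{\nu,k})$.

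Finally, once boundedness is in hand, the second assertion is a direct limit argument: for any convergent subsequence $\lambda^{\nu,k_j}\to\bar\lambda^{\nu}$, the continuity of $\nabla_{x^{\nu}}\theta_{\nu}$ and $\nabla_{x^{\nu}}c^{\nu}$ together with the closedness of the min-complementarity relation yields the KKT conditions for player $\nu$ at $(\bar x,\bar\lambda^{\nu})$. Since this holds for each $\nu\in\{1,\ldots,N\}$, stacking the multipliers gives the required KKT pair for the GNEP. I expect the main technical step to be the contradiction argument for boundedness, specifically verifying that the standard positive-linear-independence characterization of EMFCQ (noted in the paragraph following \Cref{Dfn:EMFCQ}) is precisely what rules out the limiting nontrivial nonnegative combination; everything else is a routine passage to the limit.
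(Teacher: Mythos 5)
Your proposal is correct and follows essentially the same route as the paper: feasibility of $\bar x$ and vanishing of multipliers for inactive constraints, then a normalization-and-contradiction argument against MFCQ$_\nu$, then a routine passage to the limit. The only cosmetic difference is that you contract with the MFCQ direction $d^{\nu}$ while the paper contradicts the (equivalent) positive linear independence of the active partial gradients, and you make explicit the harmless replacement of $\lambda^{\nu,k}$ by $\lambda^{\nu,k}_+$ that the paper leaves implicit here.
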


\begin{proof}
Clearly, it suffices to show the boundedness. To this end, let 
$\nu\in\{1,\ldots,N\}$ be an arbitrary player. By assumption, we have
$\lambda_i^{\nu,k}\to 0$ for every $i$ with $c_i^{\nu}(\bar{x})<0$. 
Hence, recalling that $\bar{x}$ is feasible by \eqref{Eq:ApproxKKT},
we get
\begin{equation*}
   \nabla_{x^{\nu}}\theta_{\nu}(x^k)+\sum_{c_i^{\nu}(\bar{x})=0}
   \lambda_i^{\nu,k}\nabla_{x^{\nu}}c_i^{\nu}(x^k)\to 0.
\end{equation*}
Assume now, by contradiction, that $\|\lambda^{\nu,k}\|\to\infty$. 
Dividing the above equation by $\|\lambda^{\nu,k}\|$, we obtain
\begin{equation*}
   \sum_{c_i^{\nu}(\bar{x})=0}\alpha_i^{\nu,k}\nabla_{x^{\nu}}c_i^{\nu}(x^k)\to 0, 
   \quad\text{where}\quad\alpha^{\nu,k}=
   \frac{\lambda^{\nu,k}}{\|\lambda^{\nu,k}\|}.
\end{equation*}
Obviously, $(\alpha^{\nu,k})$ is bounded and has a limit point $\alpha^{\nu}$ 
with $\alpha^{\nu}\ge 0$ and $\|\alpha^{\nu}\|=1$. Hence, we obtain
\begin{equation*}
   \sum_{c_i^{\nu}(\bar{x})=0}\alpha_i^{\nu}
   \nabla_{x^{\nu}}c_i^{\nu}(\bar{x})=0,
\end{equation*}
which contradicts GNEP-MFCQ.
\end{proof}

\noindent
The previous results indicate that GNEP-MFCQ is a more practical property 
than GNEP-CPLD, because it allows us to explicitly construct the 
multipliers which make $\bar{x}$ a KKT point. However, when dealing 
with approximate KKT conditions of the type
\begin{equation}\label{Eq:ApproxKKT2}
   \nabla_{x^{\nu}}\theta_{\nu}(x^k) + \nabla_{x^{\nu}}c^{\nu}(x^k)\lambda^{\nu,k}
   \to 0
\end{equation}
we will typically use an inexact stopping criterion. That is, we stop the 
iteration as soon as the left-hand side of the above equation is 
sufficiently close to zero, regardless of whether $\lambda^{\nu,k}$ is 
close to a multiplier $\bar{\lambda}^{\nu}$ which satisfies
$ \nabla_{x^{\nu}}\theta_{\nu}(\bar{x}) + 
\nabla_{x^{\nu}}c^{\nu}(\bar{x})\bar{\lambda}^{\nu}=0 $.
It is a peculiarity of GNEP-CPLD that the sequence of multipliers can be 
unbounded, but we still have the approximate KKT 
condition \eqref{Eq:ApproxKKT2}.

\subsection{An Error Bound Result}

There exist different types of error bounds in the optimization literature.
One class of error bounds provides a computable estimate for the distance
of a given point to the solution set or the set of KKT points, the other
class provides a measure for the distance to the feasible set. For GNEPs,
there exist some error bound results of the former type, see the papers
\cite{Dreves2014,Izmailov2014}, whereas here we use our GNEP constraint
qualifications to show that they can be used to obtain an error bound
of the latter type.

To this end, consider a GNEP of the form \eqref{Eq:SimpleGNEP} where 
$c^{\nu}$ is the constraint function of player $\nu$. It will be convenient 
to define the sets
\begin{equation*}
   X_{\nu}(x^{-\nu})=\{x^{\nu}\in\mathbb{R}^{n_{\nu}}~|~c^{\nu}(x^{\nu},x^{-\nu})
   \le 0\}.
\end{equation*}
It is well known that, for classical optimization problems, the CPLD 
constraint qualification implies a local error bound on the feasible set, 
see \cite{Andreani2012}. This result can readily be applied to GNEPs if 
we consider the concatenated constraint function $c$ from 
\eqref{Eq:ConcatenatedConstraints}. This yields an error bound on the 
distance to the set
\begin{equation*}
   X=\{x\in\mathbb{R}^n~|~c(x)\le 0\},
\end{equation*}
i.e.\ the set of points which are feasible for every player. However, 
this set is not natural to GNEPs since it does not preserve the structure 
of the players' individual optimization problems. Furthermore, we cannot 
expect such an error bound to hold without additional requirements on the 
partial gradients $ \nabla_{x^{\mu}}c^{\nu}(x), \ \mu\ne\nu, $
of player $\nu$'s constraint function with respect to another player $\mu$. 
Hence, it is more natural to ask for player-specific error bounds of the form
\begin{equation}\label{Eq:ErrorBoundnu}
   \dist(x^{\nu},X_{\nu}(x^{-\nu}))\le C\|c_+^{\nu}(x)\|,
\end{equation}
which measure the distance of $x^{\nu}$ to the corresponding set 
$X_{\nu}(x^{-\nu})$. Special care needs to be taken because the set 
$X_{\nu}(x^{-\nu})$ could be empty. In fact, this latter point is where the 
theory of GNEP error bounds is substantially different from the 
corresponding theory for classical optimization problems. To see this, 
consider a point $x$ and a player $\nu$ such that $x^{\nu}$ is on the
 boundary of $X_{\nu}(x^{-\nu})$. Two questions need to be considered:
\begin{itemize}
   \item Is there a neighbourhood $U$ of $x$ such that the set 
      $X_{\nu}(y^{-\nu})$ is nonempty for every $y\in U$?
   \item If $y^k$ is a sequence of points converging to $x$, does the 
      sequence of distances $ d^k=\dist(y^{\nu,k},X_{\nu}(y^{-\nu,k})) $
      converge to zero?
\end{itemize}
It is particularly the second question which poses significant difficulties 
to our analysis. In fact, a consequence of these problems is that 
GNEP-CPLD is not strong enough to imply a partial error bound.

\begin{example}
\begin{enumerate}[label=\textnormal{(\alph*)}]
   \item Consider a jointly-convex GNEP with two players, each 
      controlling a single variable. For simplicity, we denote the 
      variables by $x$ and $y$. The constraint function is given by
      $	c^1(x,y)=c^2(x,y)=x $. Clearly, GNEP-CPLD holds at every 
      feasible point, because the constraints are linear. However, given 
      any point $(0,\bar{y})$ on the boundary of the feasible region 
      and a neighbourhood $U$, there are points $(x,y)\in U$ such that 
      $X_2(x)$ is empty. For instance, we can simply choose $(x,y)=
      (\varepsilon,\bar{y})$ for any $\varepsilon>0$, cf.\ 
      \cref{Fig:ErrorBound} (left).
   \item Consider another GNEP with two players, each controlling a 
      single variable. Like above, we write $x$ and $y$. Let player $1$'s 
      (smooth!) constraint function be given by $ c^1(x,y) = y-\min\{0,x\}^2 $.
      Consider the feasible point $(\bar{x},\bar{y})=(1,0)$. 
      The function $c^1$ is linear in a neighbourhood of 
      $(\bar{x},\bar{y})$, which implies that GNEP-CPLD holds. 
      Furthermore, unlike with example (a), the set $X_1(y)$ is nonempty 
      for every $(x,y)$ in a neighbourhood of $(\bar{x},\bar{y})$. 
      Despite this, an error bound does not hold because, given any point 
      $(x,y)=(1,\varepsilon)$ with $\varepsilon>0$, it holds that
      $\dist(x,X_1(y))=1+\sqrt{\varepsilon}$, cf.\ \cref{Fig:ErrorBound} (right).
\end{enumerate}
\end{example}

\begin{figure}\centering
\begin{tikzpicture}
	\draw[very thin,color=gray] (-2.5,0) -- (2.5,0) node [right,color=black] {$x$};
	\draw[very thin,color=gray] (0,-1) -- (0,2.5) node [above,color=black] {$y$};
	
	\fill[color=blue,opacity=0.1]
		(0,2.5) -- (0,-1) -- (-2.5,-1) -- (-2.5,2.5);
	
	\draw[very thick] (0.2,1) circle (1pt) node [right] {$(\varepsilon,\bar{y})$};
	\draw[thin,red,dashed] (0.2,-1) -- (0.2,2.5) node [right] {$X_2(\varepsilon)=\emptyset$};
		
	\draw[very thick] (0,2.5) -- (0,-1);
\end{tikzpicture}
\quad
\begin{tikzpicture}[every plot/.append style={samples=100}]
	\draw[very thin,color=gray] (-2.5,0) -- (2.5,0) node [right,color=black] {$x$};
	\draw[very thin,color=gray] (0,-1) -- (0,2.5) node [above,color=black] {$y$};
	
	\fill[color=blue,opacity=0.1]
		plot[domain=-1.581:0.0] (\x,{\x*\x}) -- (0,0) -- (2.5,0) -- (2.5,-1) -- (-2.5,-1)
		-- (-2.5,2.5) -- (-1.581,2.5);
		
	\draw[very thick] (1,0.2) circle (1pt) node [above] {$(1,\varepsilon)$};
	\draw[thin,dashed] (1,0.2) -- (-0.447,0.2);
	\draw[very thick,red] (-2.5,0.2) node[above] {$X_1(\varepsilon)$} -- (-0.447,0.2);
	
	\draw[very thick] (0,0) -- (2.5,0);
	\draw[very thick,domain=-1.581:0.0] plot(\x,{\x*\x});
\end{tikzpicture}
\caption{Illustration of Example~2.7(a) (left) and (b) (right).}
\label{Fig:ErrorBound}
\end{figure}
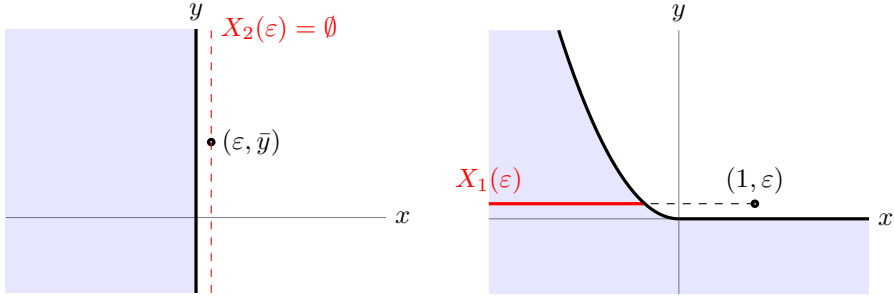

\noindent
Despite this negative result, it turns out that GNEP-MFCQ does imply an 
error bound. In order to show this, we first prove a technical lemma.

\begin{lemma}\label{Lem:ProjectionNu}
For a GNEP of the form \eqref{Eq:SimpleGNEP}, let $\nu$ be a given index, 
let $x$ be a given point with $c^{\nu}(x)\le 0$ and assume that $c^{\nu}$ 
satisfies MFCQ$_{\nu}$ in $x$. Then we have the following properties:
\begin{enumerate}[label=\textnormal{(\alph*)}]
   \item There is a neighbourhood $U$ of $x$ such that, for every $y\in U$, 
      the set $X_{\nu}(y^{-\nu})$ is nonempty.
   \item Given $\varepsilon>0$, we can choose a neighbourhood $U$ of $x$ 
      such that, for every $y\in U$, there is a point 
      $z^{\nu}\in X_{\nu}(y^{-\nu})$ with $\|z^{\nu}-y^{\nu}\|\le\varepsilon$.
\end{enumerate}
\end{lemma}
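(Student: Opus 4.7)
The plan is to establish part (b) directly; part (a) then follows by applying (b) with any fixed $\varepsilon>0$. The natural candidate for $z^\nu$ is a point of the form $y^\nu + t d^\nu$, where $d^\nu$ is the MFCQ$_\nu$ direction at $x$ furnished by \cref{Dfn:EMFCQ} and $t>0$ is a small step size to be selected.

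I would first exploit MFCQ$_\nu$ at $x$: since $c^\nu(x)\le 0$, the condition reduces to $\nabla_{x^\nu}c_i^\nu(x)^T d^\nu < 0$ for every active index $i$ (those with $c_i^\nu(x)=0$). Choosing $\delta>0$ smaller than the minimum absolute value of these finitely many directional derivatives, and invoking continuity of $\nabla_{x^\nu}c_i^\nu$, I obtain a ball $B$ around $x$ on which $\nabla_{x^\nu}c_i^\nu(\cdot)^T d^\nu \le -\delta$ holds for every active $i$.

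Next, I would fix $t := \min\{\varepsilon/\|d^\nu\|,\, t_0\}$, where $t_0>0$ is small enough that the segment $\{(y^\nu + s d^\nu, y^{-\nu}) : s \in [0,t_0]\}$ remains in $B$ whenever $y$ lies in some initial neighbourhood $V_1$ of $x$. The fundamental theorem of calculus then yields, for every active index $i$ and every $y\in V_1$,
\begin{equation*}
c_i^\nu(y^\nu + t d^\nu, y^{-\nu}) = c_i^\nu(y) + \int_0^t \nabla_{x^\nu}c_i^\nu(y^\nu + s d^\nu, y^{-\nu})^T d^\nu \, ds \le c_i^\nu(y) - \delta t.
\end{equation*}
Since $c_i^\nu(x)=0$ and $t$ is now fixed, continuity of $c_i^\nu$ lets me shrink to $V_2\subset V_1$ on which $c_i^\nu(y) < \delta t$, so the right-hand side becomes strictly negative. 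For each inactive index $i$ we have $c_i^\nu(x)<0$, and a further continuity argument using the fixed shift $t d^\nu$ produces $V_3\subset V_2$ on which these inequalities also remain valid at the perturbed point.

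Setting $U := V_3$ and $z^\nu := y^\nu + t d^\nu$ then gives $z^\nu \in X_\nu(y^{-\nu})$ with $\|z^\nu - y^\nu\| = t\|d^\nu\| \le \varepsilon$. The only real obstacle is bookkeeping the order of quantifiers: the step size $t$ has to be chosen \emph{before} the neighbourhood $V$ is shrunk, since the bound $c_i^\nu(y) < \delta t$ for active indices only becomes usable once $t$ is fixed. With that ordering respected, the remaining work is a routine application of continuity, the finiteness of the index set $\{1,\ldots,r_\nu\}$, and the integral mean-value estimate above.
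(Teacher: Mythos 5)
Your strategy is sound, and it is genuinely different from the paper's. The paper fixes a single strictly feasible point $x_t=(x^{\nu}+t d^{\nu},x^{-\nu})$ (strict feasibility obtained from the mean value theorem along $d^{\nu}$), surrounds it with a full-dimensional ball $B_r(x_t)$ consisting of points feasible for player $\nu$, and then uses the \emph{same} $z^{\nu}:=x^{\nu}+t d^{\nu}$ for every $y$ near $x$: feasibility of $(z^{\nu},y^{-\nu})$ is immediate because that point lies in $B_r(x_t)$, and the distance bound comes from a triangle inequality with two $\varepsilon/2$ contributions. You instead let $z^{\nu}=y^{\nu}+t d^{\nu}$ move with $y$ and prove feasibility quantitatively, via the uniform bound $\nabla_{x^{\nu}}c_i^{\nu}(\cdot)^T d^{\nu}\le -\delta$ on a ball and the integral estimate $c_i^{\nu}(y^{\nu}+t d^{\nu},y^{-\nu})\le c_i^{\nu}(y)-\delta t$ for active $i$. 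Both mechanisms work; the paper's argument needs only continuity of $c^{\nu}$ once $x_t$ is constructed, while yours additionally uses continuity of the partial gradients but rewards you with the exact bound $\|z^{\nu}-y^{\nu}\|=t\|d^{\nu}\|\le\varepsilon$ and an explicit feasibility margin $\delta t$ for the active constraints.

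There is, however, one slip in precisely the quantifier bookkeeping you emphasize. Your $t_0$ is constrained only by the segment-in-$B$ requirement, and $t=\min\{\varepsilon/\|d^{\nu}\|,t_0\}$ is then frozen. But your handling of the inactive indices needs $c_i^{\nu}(x^{\nu}+t d^{\nu},x^{-\nu})<0$: shrinking the neighbourhood to $V_3$ only forces $c_i^{\nu}(y^{\nu}+t d^{\nu},y^{-\nu})$ close to this value, not close to $c_i^{\nu}(x)<0$. Nothing in MFCQ$_{\nu}$ controls the directional derivatives of inactive constraints -- $\nabla_{x^{\nu}}c_i^{\nu}(x)^T d^{\nu}$ may be large and positive for an inactive $i$ with $c_i^{\nu}(x)$ close to zero -- so for a fixed $t$ of the size your conditions permit (for instance $t=t_0$ when $\varepsilon$ is large), the shifted point can already violate an inactive constraint at $y=x$, and then no $V_3$ exists. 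The repair is one line: include in the choice of $t_0$ the additional requirement that $c_i^{\nu}(x^{\nu}+s d^{\nu},x^{-\nu})<0$ for every inactive $i$ and all $s\in[0,t_0]$, which is available by continuity of $c_i^{\nu}$ at $s=0$ and finiteness of the index set; this is exactly the cap that the paper's ``$x_t$ strictly feasible for all sufficiently small $t$'' step builds in. With that amendment (and a remark on the degenerate case of an empty active set, where your $\delta$ is undefined but also unneeded), your proof is correct.
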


\begin{proof}
Since statement (b) implies (a), it suffices to show assertion (b).
To this end, let $\varepsilon>0$ be a positive 
number. By MFCQ$_{\nu}$, there is a vector $d^{\nu}\in\mathbb{R}^{n_{\nu}}$ 
such that $ \nabla_{x^{\nu}}c_i^{\nu}(x)^T d^{\nu}<0 $ holds for every $i$ 
with $c_i^{\nu}(x)=0$. By the mean value theorem and the continuity of
$ c^{\nu} $, this implies that, for sufficiently small $t>0$, the point 
$x_t=(x^{\nu}+t d^{\nu},x^{-\nu})$ is strictly feasible for player $ \nu $, 
i.e.\ $ c_i^{\nu} (x^t) < 0 $ for all $ i = 1, \ldots, r_{\nu} $ and all 
$ t > 0 $ sufficiently small. We then choose $t>0$ small enough so that 
$\|x_t-x\|\le\varepsilon/2$ and, subsequently, a radius $r>0$ such that
the (full-dimensional) neighbourhood $B_{r}(x_t)$ consists of feasible
points for player $ \nu $; note that the latter exists by the continuity
of $c^{\nu}$ and the strict feasibility of $ x_t $ for player $\nu$. Now,
set  $r'=\min\{r,\varepsilon/2\}$ and $U=B_{r'}(x)$. We claim that this
set $ U $ has the desired properties. In fact, take an arbitrary element
$ y \in U $, and define $z^{\nu} := x^{\nu}+t d^{\nu}$. Then we have
$(z^{\nu},y^{-\nu})\in B_r(x_t)$ and hence $z^{\nu}\in X_{\nu}(y^{-\nu})$.
Furthermore, $ \| x^{\nu} - y^{\nu}\| \leq \| x - y \| \leq r' \leq
\varepsilon/2 $ and $ \| z^{\nu} - x^{\nu} \|= \| t d^{\nu} \| =
\| x_t - x \| \leq \varepsilon/2 $, hence the triangle inequality
implies $ \| z^{\nu} - y^{\nu} \| \leq \| z^{\nu} - x^{\nu} \| +
\| x^{\nu} - y^{\nu} \| \leq \varepsilon $. This completes the proof.
\end{proof}

\noindent
The above lemma guarantees that, for $y$ in a vicinity of a given point $x$, 
the projection of $y^{\nu}$ onto the feasible set $X_{\nu}(y^{-\nu})$ is 
sufficiently well-behaved. Roughly speaking, if $y$ is close to $x$, then 
there is a feasible point $(z^{\nu},y^{-\nu})$ which is close to $y$ 
(and hence, close to $x$). Note that, in view of the previous examples, 
GNEP-CPLD is not enough to even imply part (a) of the lemma.

\begin{theorem}\label{Thm:ErrorBoundMFCQ}
For a GNEP of the form \eqref{Eq:SimpleGNEP}, let $\nu$ be a given index 
and $x$ be a given point with $c^{\nu}(x)\le 0$. Assume that $c^{\nu}$ 
satisfies MFCQ$_{\nu}$ in $x$ and $\nabla_{x^{\nu}}c^{\nu}$ is 
Lipschitz-continuous in a neighbourhood of $x$. Then there is a 
constant $C>0$ and a neighbourhood $U$ of $x$ such that, for every $y\in U$, 
we have the error bound \eqref{Eq:ErrorBoundnu}.
\end{theorem}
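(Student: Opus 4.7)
The plan is a direct constructive argument: given $y$ close to $x$, use the MFCQ$_\nu$ direction $d^\nu$ to push $y^\nu$ into $X_\nu(y^{-\nu})$ along the $\nu$-th block, with a step length proportional to the violation $\|c_+^\nu(y)\|$.

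First I would fix constants coming from MFCQ$_\nu$ at $x$. Writing $A:=\{i\mid c_i^\nu(x)=0\}$ and $I:=\{i\mid c_i^\nu(x)<0\}$, the definition supplies $d^\nu\in\mathbb{R}^{n_\nu}$ with $\nabla_{x^\nu}c_i^\nu(x)^T d^\nu<0$ for $i\in A$; finiteness of $A$ gives some $\eta>0$ with $\nabla_{x^\nu}c_i^\nu(x)^T d^\nu\le -\eta$ on $A$. Using continuity of $\nabla_{x^\nu}c^\nu$ and of the inactive components, I shrink a neighbourhood $U_0$ of $x$ so that, uniformly for $y\in U_0$,
\begin{equation*}
   \nabla_{x^\nu}c_i^\nu(y)^T d^\nu\le -\tfrac{\eta}{2}\ (i\in A),
   \qquad c_i^\nu(y)\le -\delta\ (i\in I),
\end{equation*}
for some $\delta>0$. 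Let $L$ be the Lipschitz constant of $\nabla_{x^\nu}c^\nu$ near $x$.

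Next, for $y\in U_0$ set $t:=K\|c_+^\nu(y)\|$ with $K:=4/\eta$ and $z^\nu:=y^\nu+td^\nu$. The integral form of the mean value theorem combined with the Lipschitz estimate gives, for each $i$,
\begin{equation*}
   c_i^\nu(z^\nu,y^{-\nu})\;\le\;c_i^\nu(y)+t\,\nabla_{x^\nu}c_i^\nu(y)^T d^\nu+\tfrac{L}{2}t^2\|d^\nu\|^2.
\end{equation*}
For $i\in A$ we have $c_i^\nu(y)\le\|c_+^\nu(y)\|$ and $t\,\nabla_{x^\nu}c_i^\nu(y)^T d^\nu\le -2\|c_+^\nu(y)\|$, so the right-hand side is bounded by $-\|c_+^\nu(y)\|+\frac{L}{2}K^2\|d^\nu\|^2\|c_+^\nu(y)\|^2$. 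For $i\in I$ it is bounded by $-\delta+K\|d^\nu\|\,\|c_+^\nu(y)\|+\frac{L}{2}K^2\|d^\nu\|^2\|c_+^\nu(y)\|^2$. Choosing a smaller neighbourhood $U\subset U_0$ on which $\|c_+^\nu(y)\|$ is sufficiently small, both expressions are $\le 0$, so $z^\nu\in X_\nu(y^{-\nu})$. Consequently
\begin{equation*}
   \dist\bigl(y^\nu,X_\nu(y^{-\nu})\bigr)\;\le\;\|z^\nu-y^\nu\|\;=\;t\|d^\nu\|\;=\;K\|d^\nu\|\cdot\|c_+^\nu(y)\|,
\end{equation*}
which is the required bound with $C:=K\|d^\nu\|$.

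The only delicate point is calibrating $U$ so that it simultaneously absorbs (a) the quadratic Taylor remainder against the linear gain $-\|c_+^\nu(y)\|$ on the active indices and (b) the first-order $O(\|c_+^\nu(y)\|)$ perturbation against the negative margin $-\delta$ on the inactive indices; this is where the Lipschitz assumption on $\nabla_{x^\nu}c^\nu$ is used. Everything else is bookkeeping, and the construction also reconfirms \Cref{Lem:ProjectionNu}(b) as a by-product.
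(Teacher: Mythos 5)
Your proof is correct, but it takes a genuinely different route from the paper. The paper argues via projection: it takes $z^\nu$ to be a nearest point of $y^\nu$ in $X_\nu(y^{-\nu})$ (nonempty and close by \cref{Lem:ProjectionNu}), writes down the KKT conditions of the projection problem \eqref{Eq:ProjOpt} (valid since MFCQ$_\nu$ persists at $z$), bounds the corresponding multipliers uniformly via \cref{Thm:SequentialMFCQnu}, and then closes the estimate with the Lipschitz--Taylor inequality \eqref{Eq:TaylorLip}, absorbing the quadratic term $r_\nu C_1 C_2\|z^\nu-y^\nu\|^2$ by making $\|z^\nu-y^\nu\|\le(2r_\nu C_1C_2)^{-1}$. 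You instead construct a feasible point directly, stepping from $y^\nu$ along the MFCQ direction $d^\nu$ with step length $t=K\|c_+^\nu(y)\|$ and verifying feasibility of $(y^\nu+td^\nu,y^{-\nu})$ by a second-order Taylor estimate -- essentially Robinson's classical stability argument. Your version is more elementary (no projection, no multipliers, no appeal to \cref{Thm:SequentialMFCQnu}) and yields an explicit constant $C=4\|d^\nu\|/\eta$; it also subsumes \cref{Lem:ProjectionNu}(b), as you note, whereas the paper needs that lemma as a separate ingredient. What the paper's route buys is reuse of machinery already developed (\cref{Lem:ProjectionNu}, \cref{Thm:SequentialMFCQnu}) and a bound anchored to the actual nearest point. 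Two small items of bookkeeping in your write-up: for inactive indices the middle term should read $KM\|d^\nu\|\,\|c_+^\nu(y)\|$ with $M$ a bound on $\|\nabla_{x^\nu}c_i^\nu(y)\|$ over $U_0$ (such an $M$ exists by continuity, so this is harmless), and you should note that shrinking $U$ also keeps the segment from $y$ to $(z^\nu,y^{-\nu})$ inside the neighbourhood on which $\nabla_{x^\nu}c^\nu$ is Lipschitz -- both are absorbed by the calibration step you already flag.
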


\begin{proof}
By \cref{Lem:ProjectionNu} (a), there is a neighbourhood $\tilde{U}$ 
of $x$ such that, for every $y\in\tilde{U}$, the set $X_{\nu}(y^{-\nu})$ is 
nonempty. By the local Lipschitz continuity of $\nabla_{x^{\nu}}c^{\nu}$, we can 
choose $\tilde{U}$ small enough so that there is a constant $C_1>0$ with
\begin{equation}\label{Eq:TaylorLip}
   c_i^{\nu}(z)+\nabla_{z^{\nu}}c_i^{\nu}(z)^T(y^{\nu}-z^{\nu})\le c_i^{\nu}(y)+
   C_1 \|z^{\nu}-y^{\nu}\|^2
\end{equation}
for every $i=1,\ldots,r_{\nu}$ and $y,z\in \tilde{U}$ with $y^{-\nu}=z^{-\nu}$. 
Now, let $y\in\tilde{U}$ be an infeasible point for player $ \nu $ (for 
feasible points, there is nothing to prove), and let $z^{\nu}=z^{\nu}(y)$ be 
a projection of $y^{\nu}$ onto the (nonempty and closed, but not necessarily
convex) set $X_{\nu}(y^{-\nu})$, i.e.\ $ z^{\nu} $ is a solution of
the optimization problem
\begin{equation}\label{Eq:ProjOpt}
   \min \| \xi^{\nu} - y^{\nu} \| \quad \st \quad 
   c^{\nu} (\xi^{\nu}, y^{-\nu}) \leq 0.
\end{equation}
For brevity, we write $z=(z^{\nu},y^{-\nu})$. Since MFCQ$_{\nu}$ holds at
$ x $, this condition also holds in a neighbourhood of $ x $. Taking
into account \cref{Lem:ProjectionNu} (b), it follows that the point
$ z $ is arbitrarily close to $ y $. Hence, without loss of generality,
we can assume that MFCQ$_{\nu}$ holds at $z$. It then follows that $z^{\nu}$ 
satisfies the KKT condition
\begin{equation*}
   \frac{z^{\nu}-y^{\nu}}{\|z^{\nu}-y^{\nu}\|}+\nabla_{z^{\nu}}c^{\nu}(z)
   \lambda^{\nu}=0
\end{equation*}
of the  optimization problem \eqref{Eq:ProjOpt}, where 
$\lambda^{\nu}=\lambda^{\nu}(y)\in\mathbb{R}^{r_{\nu}}$ denotes a
corresponding (nonnegative) Lagrange multiplier. Premultiplying this 
equation by $ (z^{\nu} - y^{\nu})^T $ yields
\begin{equation*}
   \|z^{\nu}-y^{\nu}\| = (\lambda^{\nu})^T\nabla_{z^{\nu}} c^{\nu}(z)^T 
   (y^{\nu}-z^{\nu}) \leq \sum_{i\in I^{\nu}} \lambda_i^{\nu} 
   \nabla_{z^{\nu}}c_i^{\nu}(z)^T(y^{\nu}-z^{\nu}),
\end{equation*}
where $I^{\nu}=I^{\nu}(y)$ is the set of indices for which the corresponding 
term in the sum is positive. Since $\lambda_i^{\nu}\ge 0$ for all
$i=1,\ldots,r_{\nu}$, this implies $\lambda_i^{\nu}>0 $ and
$\nabla_{z^{\nu}}c_i^{\nu}(z)^T(y^{\nu}-z^{\nu})>0 $ for all $i\in I^{\nu}$.
In particular, we have $c_i^{\nu}(z)=0$ for every $i\in I^{\nu}$. Furthermore, 
\cref{Thm:SequentialMFCQnu} implies the existence of a constant 
$C_2>0$ such that $\|\lambda^{\nu}(y)\|\le C_2$ for every $y\in\tilde{U}$. 
We now apply \cref{Lem:ProjectionNu} (b) with $\varepsilon=(2 r_{\nu} C_1 C_2)^{-1}$
and obtain a neighbourhood $U\subset\tilde{U}$ of $x$ with
$\|z^{\nu}(y)-y^{\nu}\|\le\varepsilon$ for every $y\in U$. It follows that
\begin{eqnarray*}
   \|z^{\nu}-y^{\nu}\| & \le & \sum_{i \in I^{\nu}} 
   \underbrace{\lambda_i^{\nu}}_{\leq C_2} 
   \underbrace{\nabla_{z^{\nu}} c_i^{\nu} (z)^T \big( y^{\nu} - z^{\nu} 
   \big)}_{> 0} \\
   & \leq & C_2 \sum_{i \in I^{\nu}} \big( \underbrace{c_i^{\nu} (z)}_{=0} +
   \nabla_{z^{\nu}} c_i^{\nu} (z)^T \big( y^{\nu} - z^{\nu} \big) \big) \\
   & \stackrel{\eqref{Eq:TaylorLip}}{\leq} & C_2 \sum_{i\in I^{\nu}} 
   \left( c_i^{\nu}(y) + C_1 \|z^{\nu}-y^{\nu}\|^2 \right) \\
   & \leq & C_2 \sum_{i \in I^{\nu}} c_i^{\nu} (y) + C_1 C_2 r_{\nu}
   \| z^{\nu} - y^{\nu} \|^2
\end{eqnarray*}
and hence
\begin{equation*}
   \|z^{\nu}-y^{\nu}\|-r_{\nu} C_1 C_2 \|z^{\nu}-y^{\nu}\|^2\le 
   C_2 \sum_{i\in I^{\nu}} c_i^{\nu}(y)\le C_2 \|c_+^{\nu}(y)\|_1
\end{equation*}
for every $y\in U$. This implies the desired error bound, since
\begin{equation*}
   \|z^{\nu}-y^{\nu}\|-r_{\nu} C_1 C_2 \|z^{\nu}-y^{\nu}\|^2
   \ge\frac{1}{2}\|z^{\nu}-y^{\nu}\|
\end{equation*}
by the definition of $\varepsilon$.
\end{proof}

\noindent
The above theorem establishes player-individual error bounds for GNEPs 
which satisfy GNEP-MFCQ. Note that this does not imply an error bound to 
the set $X$ of points which are feasible for the GNEP as a whole. In fact, 
the latter set could be empty and \cref{Thm:ErrorBoundMFCQ} still holds.

\section{An Augmented Lagrangian Method}\label{Sec:Algorithm}

This section describes an augmented Lagrangian method for GNEPs. Due to the 
nature of our penalization scheme, we have decided to adjust the notation 
in a manner that accounts for the possibility of partial penalization. 
To this end, we replace the constraint functions $c^{\nu}$ from 
\eqref{Eq:SimpleGNEP} by pairs of functions
\begin{equation*}
   c^{\nu} = \begin{pmatrix} g^{\nu} \\ h^{\nu} \end{pmatrix} \quad
   \text{with} \quad
   g^{\nu}: \mathbb R^n \to \mathbb R^{m_{\nu}}, \quad
   h^{\nu}: \mathbb R^n \to \mathbb R^{p_{\nu}} \quad (\text{i.e.\ }
   r_{\nu} = m_{\nu} + p_{\nu})
\end{equation*}
both of which are assumed to be at least continuously differentiable. 
Similarly to the previous notation, we write
\begin{equation*}
   m:=m_1+\ldots+m_N, \qquad p:=p_1+\ldots+p_N
\end{equation*}
and consider a GNEP where player $\nu$ has to solve the optimization problem
\begin{equation}\label{Eq:OriginalGNEP}
   \min_{x^{\nu}} \ \theta_{\nu} (x) \quad
   \st \quad g^{\nu}(x)\le 0,\quad h^{\nu}(x)\le 0.
\end{equation}
In principle, this is exactly the same problem as \eqref{Eq:SimpleGNEP}. 
However, the two functions $g^{\nu}$ and $h^{\nu}$ play completely different 
roles in our method. More precisely, $g^{\nu}$ describes the set of 
constraints which we will penalize, whereas $h^{\nu}$ is an (optional) 
constraint function which will stay as a constraint in the penalized 
subproblems. We stress that this framework is very general and gives
us some flexibility to deal with different situations. The most natural
choices are probably the following ones:
\begin{enumerate}
   \item Penalize all contraints. This full penalization approach is
      probably the simplest and most straightforward approach where, 
      formally, we set $ p_{\nu} = 0 $ for every player. The resulting
      subproblems are unconstrained NEPs and are therefore, in principle,
      simple to solve. Note that, since we use an augmented Lagrangian
      method, these subproblems are still smooth in contrast to the
      (exact) penalty schemes investigated in 
      \cite{Facchinei2010a,Fukushima2011}.
   \item Another natural splitting is the case where $ h^{\nu} $ covers
      all constraints that depend on $ x^{\nu} $ only, whereas $ g^{\nu} $
      subsumes the remaining constraints. The resulting penalized problems
      then become standard (constrained) NEPs and are therefore easier
      to solve than the given GNEP since the (presumably) difficult 
      constraints are moved to the objective function.
   \item Finally, the functions $ h^{\nu} $ might, in addition to those
      constraints depending on $ x^{\nu} $ only, also contain some
      constraints that depend on the whole vector $ x $, like some 
      joint constraints for all players. The advantages is that these
      constraints might yield a compact feasible set, so this approach
      might be useful to guarantee the solvability of the resulting 
      subproblems. The latter are, in general, more complicated in this case,
      but might still be easier than the original GNEP, for example,
      in the particular case where the penalized subproblem becomes
      a jointly-convex GNEP.
\end{enumerate}
In any case, from now on, we consider GNEPs where player $ \nu $ has
to solve problems of the form \eqref{Eq:OriginalGNEP} (recall that $ h^{\nu} $ 
might not exist). Since we perform a partial penalization 
of \eqref{Eq:OriginalGNEP}, we obtain a penalized GNEP where each player 
$\nu$ has to solve the optimization problem
\begin{equation}\label{Eq:PenalizedGNEP}
   \min_{x^{\nu}} \ L_a^{\nu} (x, u^{\nu}; \rho_{\nu}) \quad
   \st \quad h^{\nu} (x) \leq 0
\end{equation}
for some parameters $u^{\nu}$ and $\rho_{\nu}$ which will typically vary 
in each iteration. The function $L_a^{\nu}$ is the augmented Lagrangian of
player $ \nu $. A typical choice is
\begin{equation*}
   L_a^{\nu}(x,u;\rho)=\theta_{\nu}(x)+\frac{\rho}{2}\left\|\left(g^{\nu}(x)+
   \frac{u}{\rho}\right)_+\right\|^2,
\end{equation*}
which is the classical Powell-Hestenes-Rockafellar augmented Lagrangian 
(see \cite{Rockafellar1974}) of the optimization problem
\begin{equation*}
   \min_{x^{\nu}} \ \theta_{\nu}(x)\quad\st\quad g^{\nu}(x)\le 0.
\end{equation*}
Note that multiple variants of $L_a^{\nu}$ exist in the literature.

We proceed by stating our algorithmic framework. Whenever there is a 
sequence such as $(\lambda^k)$ which consists of components for each 
player, we will indicate the sequences of each player by $(\lambda^{\nu,k})$. 
That is, we have $ \lambda^k=\left(\lambda^{1,k},\ldots,\lambda^{N,k}\right) $.
We use this notation whenever applicable.

\begin{algorithm2}\label{Alg:GeneralAlgorithm} (Augmented Lagrangian method for GNEPs)
\begin{itemize}
   \item[(S.0)] Choose $(x^0,\lambda^0,\mu^0)\in\mathbb{R}^{n+m+p}$. Let $u^{\max}\ge 0$, $\tau_{\nu}\in(0,1)$, $\gamma_{\nu}>1$, $\rho_{\nu,0}>0$ for all $\nu=1,\ldots,N$, and set $k:=0$.
   \item[(S.1)] If $ (x^k, \lambda^k, \mu^k) $ is an approximate KKT point 
      of the GNEP: STOP.
   \item[(S.2)] Compute an approximate KKT point (to be defined below)
      $ ( x^{k+1}, \mu^{k+1} ) $ of the GNEP consisting of the minimization 
      problems
      \begin{equation}\label{Eq:GeneralAlgorithmS2}
         \min_{x^{\nu}} \ L_a^{\nu} (x, u^{\nu,k}; \rho_{\nu,k}) 
         \quad \st \quad h^{\nu} (x) \leq 0
      \end{equation}
      for each player $\nu=1,\ldots,N$.
   \item[(S.3)] For $\nu=1,\ldots,N$, update the vector of multipliers to
      \begin{equation}\label{Eq:GeneralAlgorithmS3}
         \lambda^{\nu,k+1}=\left(u^{\nu,k}+\rho_{\nu,k}g^{\nu}(x^{k+1})\right)_+.
      \end{equation}
   \item[(S.4)] For all $ \nu = 1, \ldots, N $, if
      \begin{equation}\label{Eq:GeneralAlgorithmS4}
         \big\| \min \{ - g^{\nu} (x^{k+1}), \lambda^{\nu, k+1} \} 
         \big\| \leq \tau_{\nu} \big\| \min \{ - g^{\nu} (x^k), 
         \lambda^{\nu,k} \} \big\|,
      \end{equation}
      then set $ \rho_{\nu, k+1} := \rho_{\nu,k} $. Else, set
      $ \rho_{\nu, k+1} := \gamma_{\nu} \rho_{\nu,k} $.
   \item[(S.5)] Set $ u^{k+1}=\min\{\lambda^{k+1},u^{\max}\}$, 
      $ k \leftarrow k+1 $, and go to (S.1).
\end{itemize}
\end{algorithm2}

\noindent
Some comments are due. First among them is the fact that the objective 
functions in \eqref{Eq:GeneralAlgorithmS2} are continuously differentiable,
and their gradients are given by
\begin{equation*}
   \nabla L_a^{\nu}(x,u;\rho)=\nabla\theta_{\nu}(x)+
   \nabla g^{\nu}(x)\left(u+\rho g^{\nu}(x)\right)_+;
\end{equation*}
a similar expression holds for the partial gradients with respect to 
$ x^{\nu} $. Note that $ L_a^{\nu} $ is, in general, not twice differentiable
even if all functions involved in our GNEP from \eqref{Eq:OriginalGNEP} are
twice continuously differentiable, however, the above expression of 
the gradient clearly shows that the gradient of $ L_a^{\nu} $ is still
(strongly) semismooth, see, e.g., \cite{FacchineiPang2003} for more details.

Secondly, it should be noted that the sequence $(u^k)$ plays an essential 
role in the algorithm. Due to the formula in Step 5, it is natural to think 
of $u^k$ as a safeguarded analogue of $\lambda^k$. In fact, the boundedness 
of $(u^k)$ is the single property which is most important to our convergence 
theory. Furthermore, note that the algorithm reduces to a standard quadratic
penalty method if we set $u^{\max}=0$. In practice, however, it is much more
desirable to set $u^{\max}$ to some fixed large value; we will revisit this
matter when discussing the numerical results in \Cref{Sec:Numerics}.

Our third comment is a practical one. Clearly, the main cost for a single 
iteration of \cref{Alg:GeneralAlgorithm} lies in Step 2, where we 
have to (approximately) solve a penalized GNEP. Hence, the overall 
feasibility of the method crucially depends on the solution of these 
subproblems. In an ideal scenario, we are able to compute approximate 
solutions for the penalized GNEPs relatively cheaply. However, we are yet 
to specify what we mean by "approximate solutions". To this end, consider 
the following assumption.

\begin{assumption}\label{Asm:General}
At Step 2 of \cref{Alg:GeneralAlgorithm}, we obtain $(x^{k+1},\mu^{k+1})\in\mathbb{R}^{n+p}$ with
\begin{gather*}
   \left\|\nabla_{x^{\nu}} L_a^{\nu}(x^{k+1},u^{\nu,k};\rho_{\nu,k})+
   \nabla_{x^{\nu}}h^{\nu}(x^{k+1})\mu^{\nu,k+1}\right\|\le\varepsilon_k \\
   \|\min\{-h^{\nu}(x^{k+1}),\mu^{\nu,k+1}\}\|\le\varepsilon'_k
\end{gather*}
for every $\nu$. Here, $(\varepsilon_k)\subset\mathbb{R}_+$ is bounded 
and $(\varepsilon'_k)\subset\mathbb{R}_+$ tends to zero.
\end{assumption}

\noindent
Of course, when dealing with optimality theorems, we will make the 
additional assumption that $\varepsilon_k\to 0$.

At first glance, it seems that \cref{Asm:General} is nothing but 
an approximate KKT condition for the subproblem given 
by \eqref{Eq:GeneralAlgorithmS2}. However, we do not require the multipliers 
$\mu^{\nu,k}$ to be nonnegative. This is because the second condition 
already implies that $\liminf_{k\to\infty} \mu^{\nu,k} \ge 0 $ for every $\nu$, 
where the limit is understood component-wise. In other words, every 
limit point of the sequence $(\mu^{\nu,k})$ must be nonnegative, but the 
values $\mu^{\nu,k}$ themselves are allowed to be negative. This has the 
benefit that, when computing approximate solutions of 
\eqref{Eq:GeneralAlgorithmS2}, we allow the solutions to be inexact even 
in the sense that the multipliers could become negative. From a practical
point of view, this difference plays some role because it allows, for 
example, the application of semismooth Newton-type methods for the
inexact solution of the resulting penalized subproblems (which, in general,
do not guarantee the nonnegativity of the multiplier estimates).

Let us also stress that we do not assume that we solve (or approximately
solve) the penalized subproblems in (S.4), only (approximate) KKT points
are required. This is of particular importance since, in principle, our
method should be able to deal with nonconvex problems, i.e.\ with
GNEPs which, in general, are neither player-convex nor jointly-convex.
Of course, this general setting does not allow us to get solutions of
the original GNEP, but the subsequent convergence theory still shows
that we get something useful as limit points.

As a final note, it is evident that \cref{Asm:General} can be 
simplified in the case of full penalization. Here, we can equivalently 
state the assumption as
\begin{equation*}
   \left\|\nabla_{x^{\nu}} L_a^{\nu}(x^{k+1},u^{\nu,k};\rho_{\nu,k})
   \right\|\le\varepsilon_k
\end{equation*}
and omit the auxiliary parameters $\mu^{\nu,k}$ and $\varepsilon'_k$.

\section{Convergence Analysis}\label{Sec:Convergence}

We proceed with a thorough convergence analysis for 
\cref{Alg:GeneralAlgorithm}. The analysis is split into two parts: 
one which deals with the feasibility of limit points and one which deals 
with optimality. Throughout this section, we will implicitly assume that 
the method generates an infinite sequence $(x^k)$, i.e.\ the stopping 
criterion in Step 1 of \cref{Alg:GeneralAlgorithm} is never satisfied.

\subsection{Feasibility}

A central question in all penalty- and augmented Lagrangian-type schemes
is the feasibility of limit points. This problem also arises for
standard optimization problems. Due to some recent results in this
area, see \cite{Birgin2014} and references therein, it turns out that
augmented Lagrangian methods have a very favourable property regarding
feasibility, namely that, under mild conditions, every limit point
has a minimizing property with respect to the constraint violation.

Here we try to find a counterpart of this result for GNEPs that will also
play a central role within our subsequent optimality results. It turns
out that this counterpart is a secondary GNEP defined by the constraint
functions $ g^{\nu} $ and $ h^{\nu} $ alone, where player $ \nu $ has
to solve the optimization problem
\begin{equation}\label{Eq:FeasibilityGNEP}
   \min_{x^{\nu}} \ \|g_+^{\nu}(x)\|^2 \quad \st \quad h^{\nu}(x)\le 0.
\end{equation}
We will refer to this problem as the {\em Feasibility GNEP} since it
describes the best we can expect regarding the feasibility of the
limit points: player $ \nu $ minimizes the violation of the penalized
constraints given by $ g^{\nu} $ (with respect to his own variables 
$ x^{\nu} $) under the non-penalized constraints described by $ h^{\nu} $.

We will now see that the behaviour of \cref{Alg:GeneralAlgorithm} 
crucially depends on the structure of this auxiliary problem. More precisely, 
under certain assumptions, every limit point of our algorithm is a solution
of the Feasibility GNEP.

\begin{lemma}\label{Lem:Feasibility}
Let $(x^k)$ be generated by \cref{Alg:GeneralAlgorithm} under 
\cref{Asm:General} and let $\bar{x}$ be a limit point 
of $(x^{k+1})_K$ for some $K\subset\mathbb{N}$. Then there are multipliers 
$(\hat{\mu}^{k+1})$, $k\in K$, such that the approximate KKT conditions
\begin{gather}\label{Eq:ConcreteFeasibilityAKKT}
   \nabla_{x^{\nu}}\|g_+^{\nu}(x^{k+1})\|^2+\nabla_{x^{\nu}}h^{\nu}(x^{k+1})
   \hat{\mu}^{\nu,k+1}\to_K 0 \\
   \nonumber \min\{-h^{\nu}(x^{k+1}),\hat{\mu}^{\nu,k+1}\}\to_K 0
\end{gather}
of \eqref{Eq:FeasibilityGNEP} hold for every $\nu$.
\end{lemma}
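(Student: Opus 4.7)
The plan is to distinguish two cases for each player $\nu$, depending on whether the penalty parameter $(\rho_{\nu,k})_{k\in K}$ remains bounded or tends to infinity. In both cases, the approximate KKT conditions guaranteed by \cref{Asm:General} combined with the multiplier update \eqref{Eq:GeneralAlgorithmS3} will provide the raw material, and the role of the lemma is essentially to convert these into approximate KKT conditions for the Feasibility GNEP \eqref{Eq:FeasibilityGNEP}.

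In the bounded case, the update rule in (S.4) forces the condition in \eqref{Eq:GeneralAlgorithmS4} to hold for all sufficiently large $k$, so that $\min\{-g^{\nu}(x^{k+1}),\lambda^{\nu,k+1}\}$ decays geometrically by a factor $\tau_{\nu}<1$. In particular, $g^{\nu}(\bar{x})\leq 0$, i.e.\ $g_+^{\nu}(\bar{x})=0$. Thus, simply choosing $\hat{\mu}^{\nu,k+1}:=0$ does the job: the first condition in \eqref{Eq:ConcreteFeasibilityAKKT} reduces to $2\nabla_{x^{\nu}}g^{\nu}(x^{k+1})g_+^{\nu}(x^{k+1})\to_K 0$, which holds by continuity, and the second condition follows from the fact that $h^{\nu}(\bar{x})\leq 0$ (which in turn follows from the $h$-part of \cref{Asm:General}).

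The more interesting case is $\rho_{\nu,k}\to\infty$. The natural choice of multiplier is $\hat{\mu}^{\nu,k+1}:=2\mu^{\nu,k+1}/\rho_{\nu,k}$. Taking the gradient inequality from \cref{Asm:General}, multiplying by $2/\rho_{\nu,k}$, and using the gradient formula $\nabla L_a^{\nu}=\nabla\theta_{\nu}+\nabla g^{\nu}(u^{\nu,k}+\rho_{\nu,k}g^{\nu})_+$, together with $\lambda^{\nu,k+1}=(u^{\nu,k}+\rho_{\nu,k}g^{\nu}(x^{k+1}))_+$, yields
\begin{equation*}
\frac{2}{\rho_{\nu,k}}\nabla_{x^{\nu}}\theta_{\nu}(x^{k+1})+2\nabla_{x^{\nu}}g^{\nu}(x^{k+1})\Bigl(\tfrac{u^{\nu,k}}{\rho_{\nu,k}}+g^{\nu}(x^{k+1})\Bigr)_+ + \nabla_{x^{\nu}}h^{\nu}(x^{k+1})\hat{\mu}^{\nu,k+1} = O(\varepsilon_k/\rho_{\nu,k}).
\end{equation*}
Since $(u^{\nu,k})$ is bounded by $u^{\max}$ and $\rho_{\nu,k}\to\infty$, the term $u^{\nu,k}/\rho_{\nu,k}$ vanishes, and the Lipschitz property of $(\cdot)_+$ shows that the middle term converges to $2\nabla_{x^{\nu}}g^{\nu}(x^{k+1})g_+^{\nu}(x^{k+1})=\nabla_{x^{\nu}}\|g_+^{\nu}(x^{k+1})\|^2$ in the limit. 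The $\theta_{\nu}$-contribution vanishes since gradients are bounded near $\bar{x}$, and this yields the first condition in \eqref{Eq:ConcreteFeasibilityAKKT}.

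The step I expect to need the most care is verifying the second (complementarity) condition in this unbounded case. Componentwise, from \cref{Asm:General} we have both $\liminf_{k\in K}(-h_i^{\nu}(x^{k+1}))\geq 0$ and $\liminf_{k\in K}\mu_i^{\nu,k+1}\geq 0$. Dividing by the positive quantity $\rho_{\nu,k}/2$ preserves the latter inequality in the limit, i.e.\ $\liminf_{k\in K}\hat{\mu}_i^{\nu,k+1}\geq 0$. For indices with $h_i^{\nu}(\bar{x})<0$, the $h$-complementarity from \cref{Asm:General} forces $\mu_i^{\nu,k+1}\to 0$, so $\hat{\mu}_i^{\nu,k+1}\to 0$ as well. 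For indices with $h_i^{\nu}(\bar{x})=0$, one uses the elementary bounds $\limsup\min\{a_k,b_k\}\leq\limsup a_k$ and $\liminf\min\{a_k,b_k\}\geq\min\{\liminf a_k,\liminf b_k\}$ applied to $a_k=-h_i^{\nu}(x^{k+1})\to_K 0$ and $b_k=\hat{\mu}_i^{\nu,k+1}$ to conclude that $\min\{-h_i^{\nu}(x^{k+1}),\hat{\mu}_i^{\nu,k+1}\}\to_K 0$. Combining both cases for each $\nu$ completes the construction.
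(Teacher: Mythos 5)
Your proof is correct and takes essentially the same route as the paper's: a per-player case distinction on the boundedness of $(\rho_{\nu,k})$, with $\hat{\mu}^{\nu,k+1}=0$ in the bounded case and $\hat{\mu}^{\nu,k+1}$ proportional to $\mu^{\nu,k+1}/\rho_{\nu,k}$ in the unbounded case, where the rescaled stationarity residual tends to zero because $(\varepsilon_k)$ is bounded while $\rho_{\nu,k}\to\infty$. The only (minor) divergences are to your advantage: you replace the paper's active-index bookkeeping with the $1$-Lipschitz estimate $\|(g^{\nu}(x^{k+1})+u^{\nu,k}/\rho_{\nu,k})_+-g_+^{\nu}(x^{k+1})\|\le\|u^{\nu,k}\|/\rho_{\nu,k}$, which is slightly cleaner, and your factor $2$ in $\hat{\mu}^{\nu,k+1}=2\mu^{\nu,k+1}/\rho_{\nu,k}$ makes the first limit match the stated gradient $\nabla_{x^{\nu}}\|g_+^{\nu}(x^{k+1})\|^2$ exactly, whereas the paper's proof tacitly works with $\tfrac{1}{2}\|g_+^{\nu}\|^2$ and $\mu^{\nu,k+1}/\rho_{\nu,k}$; moreover, your componentwise $\liminf$ argument spells out the complementarity step that the paper dismisses as immediate.
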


\begin{proof}
Let $\nu\in\{1,\ldots,N\}$. Clearly, \cref{Asm:General} implies 
that $h^{\nu}(\overline{x})\le 0$. If the sequence $(\rho_{\nu,k})$ is 
bounded, \eqref{Eq:GeneralAlgorithmS4} implies $g^{\nu}(\overline{x})\le 0$. 
Hence, in this case, \eqref{Eq:ConcreteFeasibilityAKKT} follows by simply 
setting $\hat{\mu}^{\nu,k+1}:=0$. Assume now that $(\rho_{\nu,k})$ is 
unbounded. For $k\in K$, consider the sequence $(\mu^{\nu,k+1})$ from 
\cref{Asm:General} and define
\begin{equation*}
	\alpha^k=\nabla_{x^{\nu}}\theta_{\nu}(x^{k+1})+\nabla_{x^{\nu}}g^{\nu}(x^{k+1}) 
	(u^{\nu,k}+\rho_{\nu,k}g^{\nu}(x^{k+1}))_+ +
	\nabla_{x^{\nu}}h^{\nu}(x^{k+1})\mu^{\nu,k+1}.
\end{equation*}
By \cref{Asm:General}, $(\alpha^k)$ is bounded. Dividing by 
$\rho_{\nu,k}$, we see that
\begin{equation*}
	\frac{\alpha^k}{\rho_{\nu,k}}=\frac{1}{\rho_{\nu,k}}\nabla_{x^{\nu}}
	\theta_{\nu}(x^{k+1})+\nabla_{x^{\nu}}g^{\nu}(x^{k+1}) 
	\left(\frac{u^{\nu,k}}{\rho_{\nu,k}}+g^{\nu}(x^{k+1})\right)_+ +
	\nabla_{x^{\nu}}h^{\nu}(x^{k+1})\frac{\mu^{\nu,k+1}}{\rho_{\nu,k}}
\end{equation*}
approaches zero. For every $i$ with $g_i^{\nu}(\overline{x})<0$, we have 
$(u_i^{\nu,k}/\rho_{\nu,k}+g_i^{\nu}(x^{k+1}))_+=0$ for sufficiently large 
$k \in K$. Hence, we obtain
\begin{equation*}
	\sum_{g_i^{\nu}(\overline{x})\ge 0}\max\left\{0,\frac{u_i^{\nu,k}}{\rho_{\nu,k}}+
	g_i^{\nu}(x^{k+1})\right\}\nabla_{x^{\nu}}g_i^{\nu}(x^{k+1})+\nabla_{x^{\nu}}
	h^{\nu}(x^{k+1})\frac{\mu^{\nu,k+1}}{\rho_{\nu,k}}\rightarrow 0.
\end{equation*}
Since $u_i^{\nu,k}/\rho_{\nu,k}\rightarrow 0$ by the boundedness of
$ ( u^{\nu, k} ) $, this implies that
\begin{equation}\label{Eq:FeasGNEP1}
	\sum_{g_i^{\nu}(\overline{x})\ge 0}g_i^{\nu}(x^{k+1})\nabla_{x^{\nu}}g_i^{\nu}(x^{k+1})
	+\nabla_{x^{\nu}}h^{\nu}(x^{k+1})\frac{\mu^{\nu,k+1}}{\rho_{\nu,k}}\rightarrow 0.
\end{equation}
Let us define
\begin{equation*}
	I^{\nu} (\overline{x}) := \big\{ i \mid g_i^{\nu} (\overline{x}) \geq 0 
	\big\}, \quad
	I^{\nu} (x^{k+1}) := \big\{ i \mid g_i^{\nu} (x^{k+1}) \geq 0 \big\}.
\end{equation*}
Then $ I^{\nu} (x^{k+1}) \subseteq I^{\nu} (\overline{x}) $ for all $ k \in K $
sufficiently large. Furthermore, let 
\begin{equation*}
	\hat \mu^{\nu, k+1} := \frac{\mu^{\nu, k+1}}{\rho_{\nu,k}} \quad
	\forall k \in K.
\end{equation*}
Then \cref{Asm:General} immediately shows that the second
part of \eqref{Eq:ConcreteFeasibilityAKKT} holds. Furthermore, the
first part also holds since
\begin{eqnarray*}
	\lefteqn{\nabla_{x^{\nu}} \frac{1}{2} \big\| g_+^{\nu} (x^{k+1}) \big\|^2 +
		\nabla_{x^{\nu}} h^{\nu} (x^{k+1}) \hat \mu^{\nu,k+1}} \\
	& = & \sum_{i \in I^{\nu} (x^{k+1})} g_i^{\nu} (x^{k+1}) \nabla_{x^{\nu}}
	g_i^{\nu} (x^{k+1}) + \nabla_{x^{\nu}} h^{\nu} (x^{k+1}) \hat \mu^{\nu,k+1} \\
	& = & \underbrace{\sum_{i \in I^{\nu} (\overline{x})} g_i^{\nu} (x^{k+1}) 
		\nabla_{x^{\nu}} g_i^{\nu} (x^{k+1}) + \nabla_{x^{\nu}} h^{\nu} (x^{k+1}) 
		\hat \mu^{\nu,k+1}}_{\to_K 0 \text{ by \eqref{Eq:FeasGNEP1}}} \\
	& & \quad - \sum_{i \in I^{\nu} ( \overline{x} ) \setminus I^{\nu} ( x^{k+1})} 
	\underbrace{g_i^{\nu} (x^{k+1}) \nabla_{x^{\nu}} 
		g_i^{\nu} (x^{k+1})}_{\to_K 0 \text{ since } g_i^{\nu}(\overline{x}) = 0} \\
	& \to_K & 0.
\end{eqnarray*}
This completes the proof.
\end{proof}

\noindent
Clearly, \eqref{Eq:ConcreteFeasibilityAKKT} is an approximate KKT 
condition which we have already encountered in 
\cref{Thm:SequentialCPLDnu,Thm:SequentialMFCQnu}. This 
immediately yields the following corollary.

\begin{corollary}\label{Thm:FeasibilityCQ}
Let $(x^k)$ be generated by \cref{Alg:GeneralAlgorithm} under 
\cref{Asm:General}, $\bar{x}$ be a limit point of $(x^k)$ and assume
that, for every $\nu$, the function $h^{\nu}$ satisfies CPLD$_{\nu}$ 
in $\bar{x}$. Then $\bar{x}$ is a KKT point of the 
Feasibility GNEP \eqref{Eq:FeasibilityGNEP}.
\end{corollary}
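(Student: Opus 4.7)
The plan is to deduce the corollary by chaining Lemma~\ref{Lem:Feasibility} with Theorem~\ref{Thm:SequentialCPLDnu}, but with the latter applied to the Feasibility GNEP~\eqref{Eq:FeasibilityGNEP} rather than to the original problem~\eqref{Eq:OriginalGNEP}. First I would pick an index set $K \subset \mathbb{N}$ with $(x^{k+1})_{k \in K} \to \bar x$; this is possible because $(x^k)$ and $(x^{k+1})$ share the same limit points. Lemma~\ref{Lem:Feasibility} then delivers, for each $k \in K$ and each $\nu$, multipliers $\hat\mu^{\nu,k+1}$ for which the approximate KKT system~\eqref{Eq:ConcreteFeasibilityAKKT} of the Feasibility GNEP holds.

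Next I would match~\eqref{Eq:ConcreteFeasibilityAKKT} against the abstract approximate KKT condition~\eqref{Eq:ApproxKKT} appearing in the hypothesis of Theorem~\ref{Thm:SequentialCPLDnu}. The relabelling is the obvious one: set $\tilde\theta_\nu(x) := \tfrac12 \|g_+^\nu(x)\|^2$ for the player's objective and $\tilde c^\nu := h^\nu$ for the player's constraint function, so that $\hat\mu^{\nu,k+1}$ plays the role of $\lambda^{\nu,k}$. A small point to check in passing is that $\tilde\theta_\nu$ is continuously differentiable; this is immediate because $t \mapsto \max\{0,t\}^2$ is $C^1$ with derivative $2\max\{0,t\}$, so the $C^1$ hypothesis on the objectives in Theorem~\ref{Thm:SequentialCPLDnu} is satisfied. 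The $C^1$ hypothesis on the constraints translates to the $C^1$ hypothesis on $h^\nu$, which was already imposed on the original problem. Moreover, the assumption that $h^\nu$ satisfies CPLD$_\nu$ in $\bar x$ for every $\nu$ is precisely GNEP-CPLD at $\bar x$ for the Feasibility GNEP, since its entire constraint system consists of the functions $h^\nu$.

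With all hypotheses of Theorem~\ref{Thm:SequentialCPLDnu} verified for the Feasibility GNEP, I would conclude that $\bar x$, together with some multiplier $\bar\mu = (\bar\mu^1,\ldots,\bar\mu^N)$, is a KKT point of~\eqref{Eq:FeasibilityGNEP}, which is the claim of the corollary.

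The main obstacle, if one wants to call it that, is essentially bookkeeping: making sure that the output of Lemma~\ref{Lem:Feasibility} is syntactically the same object as the input required by Theorem~\ref{Thm:SequentialCPLDnu}, and that CPLD$_\nu$ for $h^\nu$ in the original GNEP is the same thing as GNEP-CPLD for the constraint system of the Feasibility GNEP. Both of these reduce to inspecting the definitions, so no further analytical work is needed.
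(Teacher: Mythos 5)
Your proposal is correct and is exactly the paper's argument: the paper proves this corollary in one line as a direct consequence of \cref{Lem:Feasibility} combined with \cref{Thm:SequentialCPLDnu} applied to the Feasibility GNEP, which is precisely the chaining and relabelling you carry out (the factor $\tfrac12$ in your $\tilde\theta_\nu$ only rescales the multipliers and is harmless). Your explicit verification of the $C^1$ and CPLD$_\nu$ hypotheses is bookkeeping the paper leaves implicit, so no further work is needed.
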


\begin{proof}
This is a direct consequence of \cref{Lem:Feasibility} and 
\cref{Thm:SequentialCPLDnu}.
\end{proof}

\noindent
The above results establish the aforementioned connection between 
\cref{Alg:GeneralAlgorithm} and the Feasibility GNEP. Hence, 
it is natural to ask for the solution set of this auxiliary problem. 
Clearly, every feasible point of the original GNEP is a solution 
of \eqref{Eq:FeasibilityGNEP}, since the objective functions are zero. 
The converse is not true, in general, unless we assume some regularity 
conditions. The most important example is given in the following theorem.

\begin{theorem}\label{Thm:FeasibilityMFCQ}
Let $\bar{x}$ be a KKT point of the Feasibility GNEP and assume that 
the original GNEP satisfies GNEP-EMFCQ in $\bar{x}$. Then we have 
$g^{\nu}(\bar{x})\le 0$ for every $\nu$, i.e., $ \bar{x} $ is
feasible for the GNEP from \eqref{Eq:OriginalGNEP}; in particular, 
$ \bar{x} $ is a solution of the Feasibility 
GNEP \eqref{Eq:FeasibilityGNEP}.
\end{theorem}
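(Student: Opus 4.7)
The plan is to argue by contradiction: assume some component of $g^\nu(\bar x)$ is strictly positive for some player $\nu$, then use GNEP-EMFCQ to produce a direction along which the KKT identity of the Feasibility GNEP cannot hold. To set this up, I would first write out the KKT conditions for \eqref{Eq:FeasibilityGNEP} explicitly. Since $\nabla_{x^\nu}\tfrac{1}{2}\|g_+^\nu(x)\|^2 = \sum_{i:\,g_i^\nu(x)\ge 0} g_i^\nu(x)\,\nabla_{x^\nu} g_i^\nu(x)$ (as already computed in the proof of \cref{Lem:Feasibility}), the conditions read: for every $\nu$ there is $\hat\mu^\nu\ge 0$ with $h^\nu(\bar x)\le 0$, $(\hat\mu^\nu)^T h^\nu(\bar x)=0$, and
\begin{equation*}
\sum_{i:\,g_i^\nu(\bar x)>0} g_i^\nu(\bar x)\,\nabla_{x^\nu} g_i^\nu(\bar x) + \nabla_{x^\nu} h^\nu(\bar x)\,\hat\mu^\nu \;=\; 0.
\end{equation*}

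Next, fix a player $\nu$ and suppose toward a contradiction that $g_{i_0}^\nu(\bar x)>0$ for some index $i_0$. Applying GNEP-EMFCQ to the original constraint $c^\nu=(g^\nu,h^\nu)$ at $\bar x$, I obtain a direction $d^\nu\in\mathbb R^{n_\nu}$ with $\nabla_{x^\nu} c_i^\nu(\bar x)^T d^\nu<0$ whenever $c_i^\nu(\bar x)\ge 0$. This single direction simultaneously yields strict descent for every $g$-component that is nonnegative at $\bar x$ and for every active $h$-component.

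I then take the inner product of the displayed KKT equation with $d^\nu$. Each term in the first sum has $g_i^\nu(\bar x)>0$ and $\nabla_{x^\nu} g_i^\nu(\bar x)^T d^\nu<0$, so the contribution is strictly negative, and the existence of $i_0$ ensures the sum is nonempty. For the $h$-part, $(\hat\mu^\nu)^T\nabla_{x^\nu} h^\nu(\bar x)^T d^\nu=\sum_j \hat\mu_j^\nu\,\nabla_{x^\nu} h_j^\nu(\bar x)^T d^\nu$; complementarity together with $\hat\mu_j^\nu\ge 0$ implies that only indices with $h_j^\nu(\bar x)=0$ can contribute, and for those EMFCQ forces $\nabla_{x^\nu} h_j^\nu(\bar x)^T d^\nu<0$, so every summand is $\le 0$. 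The overall inner product is therefore strictly negative, contradicting that the KKT equation vanishes.

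This contradiction forces $g^\nu(\bar x)\le 0$ for every $\nu$, hence $\bar x$ is feasible for \eqref{Eq:OriginalGNEP} and the objectives $\|g_+^\nu(\bar x)\|^2$ of the Feasibility GNEP attain their global minimum value zero, so $\bar x$ solves \eqref{Eq:FeasibilityGNEP}. I do not anticipate any serious obstacle: the argument is a clean theorem-of-the-alternative style calculation. The one point that deserves care is that GNEP-EMFCQ must be invoked for the full concatenated constraint $(g^\nu,h^\nu)$, so that a single $d^\nu$ works simultaneously for the strictly positive $g$-components driving the first sum and for the active $h$-components supporting the multiplier $\hat\mu^\nu$.
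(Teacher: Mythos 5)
Your proposal is correct and is essentially the paper's own proof: assume $g^\nu_{i_0}(\bar x)>0$, write the KKT equation of the Feasibility GNEP with the vanishing terms removed, and premultiply by the GNEP-EMFCQ direction $d^\nu$ (which, as you rightly stress, applies to the concatenated constraint $c^\nu=(g^\nu,h^\nu)$ and hence handles the positive $g$-terms and the active $h$-terms simultaneously) to reach a strict-negativity contradiction. The only cosmetic difference is your factor $\tfrac12$ in the objective gradient, which is immaterial.
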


\begin{proof}
Assume that there is a $\nu\in\{1,\ldots,N\}$ and an 
$\ell\in\{1,\ldots,m_{\nu}\}$ such that $g_{\ell}^{\nu}(\bar{x})>0$. 
By assumption, there are multipliers $w^{\nu}\in\mathbb{R}^{p_{\nu}}$ such that
\begin{equation*}
   \nabla_{x^{\nu}}\|g_+^{\nu}(\bar{x})\|^2+\nabla_{x^{\nu}}h^{\nu}
   (\bar{x})w^{\nu}=0 \quad \text{and} \quad
   \min\{-h^{\nu}(\bar{x}),w^{\nu}\}=0
\end{equation*}
holds. After removing some vanishing terms, we obtain
\begin{equation*}
   2\sum_{g_i^{\nu}(\bar{x})>0}g_i^{\nu}(\bar{x})\nabla_{x^{\nu}}g_i^{\nu}
   (\bar{x})+\sum_{h_j^{\nu}(\bar{x})=0}w_j^{\nu}\nabla_{x^{\nu}}h_j^{\nu}
   (\bar{x})=0.
\end{equation*}
Premultiplication of this equation with $d^{\nu}$, where $d^{\nu}$ is the 
vector from GNEP-EMFCQ, yields a contradiction.
\end{proof}

\noindent
The above theorem shows that our convergence theory naturally comprises 
GNEP-EMFCQ. Of course, we could easily have carried out our analysis without 
even considering the (weaker) GNEP-CPLD. However, we believe that the 
theorems above together with the Feasibility GNEP most clearly explain the 
structure and behaviour of \cref{Alg:GeneralAlgorithm}, especially with
regard to our GNEP-tailored constraint qualifications.

Another interesting case in which the Feasibility GNEP has some structural 
properties is the following, which covers, as a special case, the 
jointly-convex GNEP. Assume that the functions $g^{\nu}$ describe a shared 
constraint (which we denote by $g$) and that $h^{\nu}$ is a function of 
$x^{\nu}$ only. Furthermore, assume that both $g$ and $h^{\nu}$ are convex. 
Hence, player $\nu$'s optimization problem takes the form
\begin{equation}\label{Eq:SpecialGNEPJC}
   \min_{x^{\nu}} \ \theta_{\nu} (x) \quad
   \st \quad g(x)\le 0,\quad h^{\nu}(x^{\nu})\le 0.
\end{equation}
For such GNEPs, we can prove the following theorem which makes the same 
assertion as \cref{Thm:FeasibilityMFCQ}. Note, however, that we do 
not require any further constraint qualifications, particularly for the 
function $g$.

\begin{theorem}\label{Thm:FeasibilityJC}
Consider a GNEP of the form \eqref{Eq:SpecialGNEPJC} with $ g, h^{\nu} $
being convex, and assume that the  GNEP has feasible points. Then, if
$\bar{x}$ is a KKT point of the corresponding Feasibility GNEP,
we have $g(\bar{x})\le 0$, i.e.\ $ \bar{x} $ is feasible for
\eqref{Eq:SpecialGNEPJC}.
\end{theorem}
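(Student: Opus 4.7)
The plan is to exploit the joint convexity of the shared constraint $g$ (and of each $h^\nu$) together with the existence of a feasible point $y$ of the GNEP to derive a global inequality of the form $\|g_+(\bar x)\|^2 \leq \|g_+(y)\|^2 = 0$.

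First I would set $\phi(x) := \|g_+(x)\|^2 = \sum_i \max\{0,g_i(x)\}^2$. Since $g$ is convex as a function of the full vector $x$, each component $g_i$ is convex; applying $\max\{0,\cdot\}$ preserves convexity, and squaring a nonnegative convex function also preserves convexity. Hence $\phi$ is convex on all of $\mathbb R^n$. Writing down the KKT conditions of the Feasibility GNEP at $\bar x$ yields, for every player $\nu$, multipliers $w^\nu \geq 0$ with $w^\nu_j h^\nu_j(\bar x^\nu) = 0$ and
\begin{equation*}
   \nabla_{x^\nu}\phi(\bar x) \;=\; 2\sum_{i}\max\{0,g_i(\bar x)\}\nabla_{x^\nu}g_i(\bar x) \;=\; -\nabla h^\nu(\bar x^\nu)\, w^\nu,
\end{equation*}
where we have used $h^\nu = h^\nu(x^\nu)$ so that $\nabla_{x^\nu}h^\nu = \nabla h^\nu$.

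Next, by assumption there exists a feasible point $y$ with $g(y)\leq 0$ and $h^\nu(y^\nu)\leq 0$ for all $\nu$. Convexity of $\phi$ gives
\begin{equation*}
   \phi(y) \;\geq\; \phi(\bar x) + \sum_{\nu=1}^N \nabla_{x^\nu}\phi(\bar x)^T(y^\nu - \bar x^\nu) \;=\; \phi(\bar x) - \sum_{\nu=1}^N (w^\nu)^T \nabla h^\nu(\bar x^\nu)^T (y^\nu - \bar x^\nu).
\end{equation*}
For each player $\nu$ and each index $j$, convexity of $h^\nu_j$ yields $\nabla h^\nu_j(\bar x^\nu)^T(y^\nu - \bar x^\nu) \leq h^\nu_j(y^\nu) - h^\nu_j(\bar x^\nu)$. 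Whenever $w^\nu_j > 0$, complementarity forces $h^\nu_j(\bar x^\nu) = 0$, and feasibility of $y$ gives $h^\nu_j(y^\nu) \leq 0$. Summing with the nonnegative weights $w^\nu_j$ therefore produces $(w^\nu)^T\nabla h^\nu(\bar x^\nu)^T(y^\nu - \bar x^\nu) \leq 0$ for every $\nu$.

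Combining these inequalities gives $\phi(y) \geq \phi(\bar x)$. Since $g(y)\leq 0$ implies $\phi(y) = 0$, we conclude $\phi(\bar x) = 0$, i.e.\ $g(\bar x)\leq 0$, as claimed. The main delicate point—really the only one—is that the argument crucially uses joint convexity of $g$ (to get convexity of $\phi$ in $x$ rather than only in each $x^\nu$), which is why no additional constraint qualification on $g$ is needed in contrast to \cref{Thm:FeasibilityMFCQ}; the separability of $h^\nu$ in $x^\nu$ is what lets the player-wise KKT conditions assemble into a single global subgradient inequality for $\phi$.
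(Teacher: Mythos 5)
Your proof is correct and follows essentially the same route as the paper: there, too, the player-wise KKT conditions of the Feasibility GNEP are concatenated into the KKT system of the single convex program $\min\,\|g_+(x)\|^2$ subject to $h^1(x^1)\le 0,\ldots,h^N(x^N)\le 0$, and the existence of feasible points for \eqref{Eq:SpecialGNEPJC} then forces the optimal value, hence $\|g_+(\bar{x})\|^2$, to be zero. The only difference is cosmetic: where the paper invokes the standard fact that a KKT point of a convex program is a global minimum, you verify that step by hand via the gradient inequality for $\phi$ together with complementary slackness.
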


\begin{proof}
Since $\bar{x}$ is a KKT point of the Feasibility GNEP, there are 
multipliers $w^{\nu}$ such that
\begin{equation*}
   \nabla_{x^{\nu}}\|g_+(\bar{x})\|^2+\nabla h^{\nu}(\bar{x}^{\nu})
   w^{\nu}=0 \quad \text{and} \quad
   \min\{-h^{\nu}(\bar{x}^{\nu}),w^{\nu}\}=0
\end{equation*}
for every $\nu$. Hence, $\bar{x}$ together with $w=(w^1,\ldots,w^N)$ 
is a KKT point of the optimization problem
\begin{equation*}
   \min \ \|g_+(x)\|^2 \quad \st \quad h^1(x^1)\le 0,~\ldots,~h^N(x^N)\le 0.
\end{equation*}
Note that this is a convex optimization problem. Hence the KKT point
is a global minimum of this minimization problem. By assumption, however, 
the feasible set of \eqref{Eq:SpecialGNEPJC} is nonempty. This implies
that $ g_+(\bar{x}) = 0 $, hence the assertion follows.
\end{proof}

\noindent
The results in this section have shown that \cref{Alg:GeneralAlgorithm}
does (in some sense) tend to achieve feasibility. However, it should be
noted that our analysis does not exclude the possibility of the sequence
$(x^k)$ converging to an infeasible point. For instance, the Feasibility
GNEP could have solutions which are not feasible for \eqref{Eq:OriginalGNEP}.
This is particularly plausible if GNEP-EMFCQ is not satisfied or the
constraint functions $g^{\nu}$, $h^{\nu}$ are not convex.

\subsection{Optimality}

We proceed by discussing the optimality of limit points of
\cref{Alg:GeneralAlgorithm} applied to the general GNEP from
\eqref{Eq:OriginalGNEP}. To this end, we recall \cref{Asm:General}.
If we assume $\varepsilon_k\to 0$, the assumption can be stated as
\begin{gather*}
   \nabla_{x^{\nu}} L_a^{\nu}(x^{k+1},u^{\nu,k};\rho_{\nu,k})+\nabla_{x^{\nu}}
   h^{\nu}(x^{k+1})\mu^{\nu,{k+1}}\to 0,\\
   \min\{-h^{\nu}(x^{k+1}),\mu^{\nu,{k+1}}\}\to 0.
\end{gather*}
By expanding the augmented Lagrangian, we obtain
\begin{equation}\label{Eq:OriginalGNEPAKKT}
   \nabla_{x^{\nu}}\theta_{\nu}(x^{k+1})+\nabla_{x^{\nu}}g^{\nu}(x^{k+1})
   \lambda^{\nu,{k+1}}+\nabla_{x^{\nu}} h^{\nu}(x^{k+1})\mu^{\nu,{k+1}}\to 0,
\end{equation}
which already suggests that the sequence $x^k$ satisfies an approximate 
KKT condition for the GNEP \eqref{Eq:OriginalGNEP}. In fact, we can prove 
the following lemma.

\begin{lemma}\label{Lem:Optimality}
Let $(x^k)$ be a sequence generated by \cref{Alg:GeneralAlgorithm} 
under \cref{Asm:General}, where $\varepsilon_k\downarrow 0$, and 
let $\bar{x}$ be a limit point of $(x^k)$ on some subsequence 
$K\subset\mathbb{N}$. If $\bar{x}$ is feasible, we have
\begin{gather*}
   \nabla_{x^{\nu}}\theta_{\nu}(x^k)+\nabla_{x^{\nu}}g^{\nu}(x^k)\lambda^{\nu,k}+
   \nabla_{x^{\nu}} h^{\nu}(x^k)\mu^{\nu,k}\to_K 0 \\
   \min\{-g^{\nu}(x^k),\lambda^{\nu,k}\}\to_K 0,\quad 
   \min\{-h^{\nu}(x^k),\mu^{\nu,k}\}\to_K 0.
\end{gather*}
for every $\nu$.
\end{lemma}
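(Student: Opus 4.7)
The plan is to reduce each of the three conclusions to the hypotheses supplied by \cref{Asm:General} (with $\varepsilon_k \downarrow 0$) and the update rules of \cref{Alg:GeneralAlgorithm}, after a harmless index shift. Since $\bar x$ is a limit point of $(x^k)$ on $K$, I would replace $K$ by $K-1$ and argue directly on the sequence $(x^{k+1})_{k \in K-1}$, using the definition $\lambda^{\nu,k+1} = (u^{\nu,k} + \rho_{\nu,k} g^\nu(x^{k+1}))_+$ from \eqref{Eq:GeneralAlgorithmS3}.

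For the gradient condition, I would simply expand
\begin{equation*}
\nabla_{x^{\nu}} L_a^{\nu}(x^{k+1},u^{\nu,k};\rho_{\nu,k}) = \nabla_{x^{\nu}}\theta_{\nu}(x^{k+1}) + \nabla_{x^{\nu}} g^{\nu}(x^{k+1})\lambda^{\nu,k+1},
\end{equation*}
at which point the first condition of \cref{Asm:General} is exactly the desired limit. The complementarity statement for $h^\nu$ is the second condition of \cref{Asm:General} verbatim. Both of these require no hypothesis on feasibility.

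The substantive work is the complementarity statement $\min\{-g^\nu(x^k),\lambda^{\nu,k}\} \to_K 0$, and I would split on whether the penalty sequence $(\rho_{\nu,k})$ is bounded. If it is bounded, then by the rule in Step (S.4), the test \eqref{Eq:GeneralAlgorithmS4} must be satisfied for every sufficiently large $k$, so $\|\min\{-g^\nu(x^{k+1}),\lambda^{\nu,k+1}\}\|$ contracts by a factor $\tau_\nu < 1$ at each such step and therefore tends to zero along the whole sequence, which yields the conclusion on $K$. If $(\rho_{\nu,k})\to\infty$, I would split on the index $i$: for $i$ with $g_i^\nu(\bar x)<0$, the boundedness of $(u^{\nu,k})$ (guaranteed by (S.5) and the cap $u^{\max}$) together with $g_i^\nu(x^{k+1}) \to_K g_i^\nu(\bar x)<0$ forces $u_i^{\nu,k}+\rho_{\nu,k}g_i^\nu(x^{k+1}) < 0$ for large $k \in K$, hence $\lambda_i^{\nu,k+1}=0$ and the min vanishes; for $i$ with $g_i^\nu(\bar x)=0$ (this is where feasibility of $\bar x$ enters), we have $-g_i^\nu(x^{k+1})\to_K 0$, and combined with $\lambda_i^{\nu,k+1}\ge 0$ this yields $|\min\{-g_i^\nu(x^{k+1}),\lambda_i^{\nu,k+1}\}|\le |g_i^\nu(x^{k+1})|\to_K 0$.

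The only place where real care is needed is the unbounded-penalty subcase for indices with $g_i^\nu(\bar x)<0$, and the main pitfall there is to remember that $u^{\nu,k}$ stays bounded by $u^{\max}$ even though $\lambda^{\nu,k}$ itself may blow up; without this safeguard one could not conclude negativity of $u_i^{\nu,k}+\rho_{\nu,k}g_i^\nu(x^{k+1})$. Everything else is a routine bookkeeping of limits and an index shift.
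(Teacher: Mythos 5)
Your proof is correct and takes essentially the same route as the paper's: the gradient condition and the $h^{\nu}$-complementarity are read off directly from \cref{Asm:General} after expanding $\nabla_{x^{\nu}}L_a^{\nu}$ via the update \eqref{Eq:GeneralAlgorithmS3}, and the $g^{\nu}$-complementarity is settled by the same dichotomy on the boundedness of $(\rho_{\nu,k})$, with the contraction test \eqref{Eq:GeneralAlgorithmS4} in the bounded case and the safeguard $u^{\nu,k}\le u^{\max}$ forcing $\lambda_i^{\nu,k+1}=0$ for $g_i^{\nu}(\bar{x})<0$ in the unbounded case. The only difference is cosmetic: you spell out the indices with $g_i^{\nu}(\bar{x})=0$, which the paper's proof leaves implicit.
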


\begin{proof}
We only need to prove the second assertion. To this end, let $\nu$ and $i$ 
be given indices such that $g_i^{\nu}(\bar{x})<0$. If $(\rho_{\nu,k})$ is 
bounded, \eqref{Eq:GeneralAlgorithmS4} implies that $\lambda_i^{\nu,k}\to_K 0$. 
On the other hand, if $(\rho_{\nu,k})$ is unbounded, the updating scheme in 
\eqref{Eq:GeneralAlgorithmS3} also implies $\lambda^{\nu,k}\to_K 0$.
\end{proof}

\noindent
The above theorem shows that, barring the feasibility of $\bar{x}$, 
the sequence $(x^k)_K$ satisfies the approximate KKT conditions from 
\cref{Thm:SequentialCPLDnu}. Hence, we can use this fact to prove 
the optimality theorem below. Note that we need to explicitly assume the 
feasibility of $\bar{x}$. In some cases, this is not necessary -- 
consider, for instance, the setting of \cref{Thm:FeasibilityMFCQ}, 
where we have GNEP-EMFCQ.

\begin{theorem}\label{Thm:OptimalityCQ}
Let $(x^k)$ be a sequence generated by \cref{Alg:GeneralAlgorithm} 
under \cref{Asm:General}, where $\varepsilon_k\downarrow 0$, and 
let $\bar{x}$ be a limit point of $(x^k)$. Assume that one of the 
following conditions is satisfied:
\begin{enumerate}[label=\textnormal{(\alph*)}]
   \item $\bar{x}$ is feasible and GNEP-CPLD holds in $\bar{x}$.
   \item GNEP-EMFCQ holds in $\bar{x}$.
\end{enumerate}
Then $\bar{x}$ is a KKT point of the GNEP.
\end{theorem}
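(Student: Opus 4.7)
The plan is to reduce both cases to an application of \Cref{Thm:SequentialCPLDnu} to the concatenated constraint $c^{\nu} = (g^{\nu}, h^{\nu})$ with the concatenated multiplier sequence $(\lambda^{\nu,k}, \mu^{\nu,k})$. Observe first that GNEP-CPLD for the GNEP \eqref{Eq:OriginalGNEP} is by definition the CPLD$_{\nu}$-condition of \Cref{Dfn:CPLD} applied to this concatenated $c^{\nu}$, so \Cref{Thm:SequentialCPLDnu} will directly yield a KKT point once the approximate KKT conditions \eqref{Eq:ApproxKKT} are established for $c^{\nu}$.

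For case (a), \Cref{Lem:Optimality} applies immediately because $\bar{x}$ is assumed feasible: on the subsequence $K$ along which $x^k \to \bar{x}$, the three convergence statements from the lemma combine to give
\begin{equation*}
    \nabla_{x^{\nu}}\theta_{\nu}(x^k) + \nabla_{x^{\nu}} c^{\nu}(x^k) (\lambda^{\nu,k}, \mu^{\nu,k}) \to_K 0, \qquad \min\{-c^{\nu}(x^k), (\lambda^{\nu,k},\mu^{\nu,k})\} \to_K 0
\end{equation*}
for every $\nu$, which are exactly the approximate KKT conditions \eqref{Eq:ApproxKKT}. \Cref{Thm:SequentialCPLDnu} then delivers a multiplier $\bar{\lambda} = (\bar{\lambda}^{\nu}, \bar{\mu}^{\nu})_{\nu=1}^N$ such that $(\bar{x}, \bar{\lambda})$ is a KKT point of the GNEP.

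For case (b), the task is to establish feasibility of $\bar{x}$, after which the argument from case (a) goes through since GNEP-EMFCQ implies GNEP-CPLD (as noted after \Cref{Dfn:EMFCQ}). Feasibility is obtained via the Feasibility GNEP: since the vector $d^{\nu}$ from the definition of EMFCQ$_{\nu}$ for the concatenated $c^{\nu}$ certifies EMFCQ$_{\nu}$ for $h^{\nu}$ alone (by restricting to indices of $h^{\nu}$), and EMFCQ implies CPLD, the function $h^{\nu}$ satisfies CPLD$_{\nu}$ at $\bar{x}$ for every $\nu$. Hence \Cref{Thm:FeasibilityCQ} applies and $\bar{x}$ is a KKT point of the Feasibility GNEP \eqref{Eq:FeasibilityGNEP}. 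Invoking \Cref{Thm:FeasibilityMFCQ} with the GNEP-EMFCQ assumption then yields $g^{\nu}(\bar{x}) \leq 0$ for every $\nu$, i.e., $\bar{x}$ is feasible for \eqref{Eq:OriginalGNEP}. We are now in the situation of case (a).

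The only subtlety worth checking carefully is the bookkeeping that confirms GNEP-CPLD (resp.\ GNEP-EMFCQ) for the concatenated $c^{\nu} = (g^{\nu}, h^{\nu})$ is the correct interpretation of the theorem's hypothesis, and that the joint multiplier $(\lambda^{\nu,k}, \mu^{\nu,k})$ satisfies the complementarity-type estimate componentwise; both follow from the block structure of $\nabla_{x^{\nu}} c^{\nu}$ and the fact that the $g$- and $h$-parts of the approximate complementarity conditions obtained from \Cref{Lem:Optimality} and \Cref{Asm:General} hold separately. I do not anticipate any essential obstacle beyond this routine splicing.
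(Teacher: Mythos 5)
Your proposal is correct and follows essentially the same route as the paper: case (a) combines \cref{Lem:Optimality} with \cref{Thm:SequentialCPLDnu} applied to the concatenated constraint $c^{\nu}=(g^{\nu},h^{\nu})$, and case (b) first derives CPLD$_{\nu}$ for $h^{\nu}$ from GNEP-EMFCQ, invokes \cref{Thm:FeasibilityCQ} and \cref{Thm:FeasibilityMFCQ} to obtain feasibility of $\bar{x}$, and then reduces to case (a). Your only deviation is cosmetic: you pass from EMFCQ$_{\nu}$ for $c^{\nu}$ to EMFCQ$_{\nu}$ (hence CPLD$_{\nu}$) for $h^{\nu}$ by restricting the direction $d^{\nu}$ to the $h$-indices, whereas the paper routes through the implication GNEP-EMFCQ $\Rightarrow$ GNEP-CPLD; both yield the same intermediate fact.
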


\begin{proof}
First assume that (a) holds. Since $ \bar{x} $ is feasible,
we can apply \cref{Lem:Optimality} and obtain a sequence
of approximate KKT points for the GNEP from \eqref{Eq:OriginalGNEP}.
The statement then follows from \cref{Thm:SequentialCPLDnu} by
using the fact that we have $ c^{\nu} = \big( g^{\nu}, h^{\nu} \big) $.

Next consider case (b). Since GNEP-EMFCQ implies GNEP-CPLD, it follows
that, for each player $ \nu $, CPLD$_{\nu}$ holds for 
$ c^{\nu} = (g^{\nu}, h^{\nu}) $. This, by definition, yields that, for
each $ \nu = 1, \ldots, N $, CPLD$_{\nu} $ holds for $ h^{\nu} $. Hence
\cref{Thm:FeasibilityCQ} shows that $ \bar{x} $ is a KKT point 
of the Feasibility GNEP \eqref{Eq:FeasibilityGNEP}. Consequently, we obtain 
from \cref{Thm:FeasibilityMFCQ} that $ \bar{x} $ is feasible
for the GNEP \eqref{Eq:OriginalGNEP}. Then we can proceed as in part (a).
\end{proof}

\noindent
Note that, despite \cref{Lem:Optimality}, the multipliers $\lambda$ 
and $\mu$ which make $\bar{x}$ a KKT point are not necessarily limit 
points of the sequences $(\lambda^k)$ and $(\mu^k)$. This is a consequence 
of GNEP-CPLD, see \cref{Thm:SequentialCPLDnu}. However, we do get this
property if we assume GNEP-EMFCQ instead, see \cref{Thm:SequentialMFCQnu}.

Finally, without proof, we would like to briefly mention another kind of 
convergence theorem one can easily show for \cref{Alg:GeneralAlgorithm}.
In the above results, we have usually required that the sequence $(x^k)$
has a limit point. If we make the (much stronger) assumption that the 
sequence of triples $(x^k,\lambda^k,\mu^k)$ has a limit point, we obtain 
the following theorem which does not require any constraint qualifications.

\begin{theorem}\label{Thm:OptimalityTripel}
Let $(x^k)$, $(\lambda^k)$ and $(\mu^k)$ be the sequences generated by 
\cref{Alg:GeneralAlgorithm} under \cref{Asm:General}, where
$\varepsilon_k\downarrow 0$. Then every limit point of the sequence 
of triples $(x^k,\lambda^k,\mu^k)$ is a KKT point of the GNEP.
\end{theorem}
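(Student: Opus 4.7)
The plan is to take an arbitrary limit point $(\bar x,\bar\lambda,\bar\mu)$ of the triples along some subsequence $K\subset\mathbb N$ and verify the three components of the KKT conditions of \cref{Dfn:KKT} applied to the concatenated constraint $c^\nu=(g^\nu,h^\nu)$ with multiplier $(\bar\lambda^\nu,\bar\mu^\nu)$. The stationarity equation is the easy piece: starting from \cref{Asm:General} with $\varepsilon_k\downarrow 0$ and expanding the augmented Lagrangian gradient as in \eqref{Eq:OriginalGNEPAKKT}, one passes to the limit along $K$ using the continuity of $\nabla\theta_\nu,\nabla g^\nu,\nabla h^\nu$ and the convergence of $\lambda^{\nu,k}$ and $\mu^{\nu,k}$. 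This yields
\[
\nabla_{x^\nu}\theta_\nu(\bar x)+\nabla_{x^\nu}g^\nu(\bar x)\bar\lambda^\nu+\nabla_{x^\nu}h^\nu(\bar x)\bar\mu^\nu=0
\]
for every $\nu$. The $h^\nu$-block of the complementarity system comes for free from \cref{Asm:General}: the relation $\min\{-h^\nu(x^{k+1}),\mu^{\nu,k+1}\}\to 0$ passes to the limit and yields $h^\nu(\bar x)\le 0$, $\bar\mu^\nu\ge 0$, and $(\bar\mu^\nu)^T h^\nu(\bar x)=0$.

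The remaining and genuinely delicate task is to establish $\min\{-g^\nu(\bar x),\bar\lambda^\nu\}=0$ without any constraint qualification. Nonnegativity $\bar\lambda^\nu\ge 0$ is immediate because, by \eqref{Eq:GeneralAlgorithmS3}, each $\lambda^{\nu,k+1}$ is already a nonnegative vector. To obtain $g^\nu(\bar x)\le 0$ I would split according to whether the penalty parameter $\rho_{\nu,k}$ stays bounded. If it does, then the update rule in (S.4) enforces $\min\{-g^\nu(x^{k+1}),\lambda^{\nu,k+1}\}\to 0$ and the limit already gives both feasibility and complementarity for player $\nu$. If $\rho_{\nu,k}\to\infty$, divide the update formula $\lambda^{\nu,k+1}=\bigl(u^{\nu,k}+\rho_{\nu,k}g^\nu(x^{k+1})\bigr)_+$ by $\rho_{\nu,k}$. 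Since $(u^{\nu,k})$ is bounded by construction in (S.5) and $(\lambda^{\nu,k+1})_K$ converges, the left-hand side tends to $0$ while the right-hand side tends to $g_+^\nu(\bar x)$, giving $g^\nu(\bar x)\le 0$.

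For the final piece, $(\bar\lambda^\nu)^T g^\nu(\bar x)=0$, fix an index $i$ with $\bar\lambda_i^\nu>0$ and show $g_i^\nu(\bar x)=0$. In the bounded-$\rho_{\nu,k}$ branch, the conclusion of (S.4) that I already used yields this componentwise. In the unbounded branch, $u_i^{\nu,k}+\rho_{\nu,k}g_i^\nu(x^{k+1})\to\bar\lambda_i^\nu>0$ along $K$; combined with the boundedness of $u_i^{\nu,k}$ and $\rho_{\nu,k}\to\infty$, this precludes $g_i^\nu(\bar x)<0$, so $g_i^\nu(\bar x)\ge 0$, and together with the feasibility just proved we get $g_i^\nu(\bar x)=0$.

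The main obstacle is the unbounded-penalty branch, where there is no a priori bound on $\rho_{\nu,k}g^\nu(x^{k+1})$ and one might fear that $\lambda^{\nu,k+1}$ blows up or carries inconsistent information about $g^\nu(\bar x)$. The entire argument relies on the hypothesis of this theorem, namely that $\lambda^{\nu,k+1}$ itself (not merely $x^k$) converges along $K$; this is precisely what allows one to divide by $\rho_{\nu,k}$ and extract both the feasibility and the sign information needed for complementarity, and it is why no constraint qualification is required here.
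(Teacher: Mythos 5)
Your proof is correct; the paper states \cref{Thm:OptimalityTripel} explicitly without proof, and your argument is precisely the standard one the authors intend, reusing the machinery of \cref{Lem:Optimality}: stationarity by passing to the limit in \eqref{Eq:OriginalGNEPAKKT}, the $h^{\nu}$-block directly from \cref{Asm:General}, and for the $g^{\nu}$-block the dichotomy between bounded $\rho_{\nu,k}$ (where \eqref{Eq:GeneralAlgorithmS4} forces geometric decay of $\min\{-g^{\nu}(x^{k+1}),\lambda^{\nu,k+1}\}$) and $\rho_{\nu,k}\to\infty$ (where dividing \eqref{Eq:GeneralAlgorithmS3} by $\rho_{\nu,k}$ and using $0\le u^{\nu,k}\le u^{\max}$ together with the convergence of $(\lambda^{\nu,k})_K$ yields $g_+^{\nu}(\bar{x})=0$ and, via $\bar{\lambda}_i^{\nu}>0\implies g_i^{\nu}(\bar{x})\ge 0$, complementarity). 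One cosmetic point: since the limit point is of the triples $(x^k,\lambda^k,\mu^k)$ along $K$, you should invoke the update formula at the previous step, i.e.\ $\lambda^{\nu,k}=\bigl(u^{\nu,k-1}+\rho_{\nu,k-1}\,g^{\nu}(x^k)\bigr)_+$, rather than asserting $x^{k+1}\to_K\bar{x}$; this reindexing changes nothing substantive in your argument.
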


\section{Computing Variational Equilibria}\label{Sec:Variational}

We have already seen that \cref{Alg:GeneralAlgorithm} possesses 
some particular convergence properties for jointly-convex GNEPs -- consider, 
for instance, \cref{Thm:FeasibilityJC}. In this section, we present 
a modified method which is tailored towards the computation of variational 
(or normalized) equilibria, cf.\ \cite{Facchinei2010,Fischer2014,Rosen1965}.
To this end, we perform an obvious change in notation and consider a GNEP
of the form
\begin{equation}\label{Eq:OriginalGNEPJC}
   \min_{x^{\nu}} \ \theta_{\nu} (x) \quad
   \st \quad g(x)\le 0,\quad h(x)\le 0
\end{equation}
with smooth functions $ g: \mathbb R^n \to \mathbb R^m $ and
$ h: \mathbb R^n \to \mathbb R^p $ whose components are assumed to be
convex. Hence, all players share the same constraints. The most 
straightforward modification of \cref{Alg:GeneralAlgorithm} is 
to simply choose the same iteration parameters
\begin{equation*}
   \tau_{\nu}, \quad \gamma_{\nu}, \quad \rho_{\nu,0}, \quad \lambda^{\nu,0}, 
   \quad u^{\nu,0}, \quad \text{and} \quad \mu^{\nu,0}
\end{equation*}
for every player $\nu$. Looking at the updating scheme in 
\cref{Alg:GeneralAlgorithm}, this implies that the corresponding 
parameters $u^{\nu,k}$, $\lambda^{\nu,k}$, and $\rho_{\nu,k}$ will remain 
independent of $\nu$ throughout -- something which is clearly desirable 
when computing variational equilibria. For the sake of simplicity, we can 
now drop the index $\nu$ altogether and simply refer to the parameters as 
$u^k$, $\lambda^k$, $\rho_k$, and so on. This prompts us to restate the 
algorithm as follows.

\begin{algorithm2}\label{Alg:VariationalEquilibriaAlgorithm} (Augmented
Lagrangian method for variational equilibria)
\begin{itemize}
   \item[(S.0)] Let $ u^{\max} \ge 0$, $\tau \in (0,1)$, $\gamma > 1$ and 
      $\rho_0>0$. Choose $ x^0 \in \mathbb R^n$, $\lambda^0\in\mathbb{R}^m$, 
      $\mu^0\in\mathbb{R}^p, u^0 \in [0, u^{\max}]^m$, and set $ k:= 0 $.
   \item[(S.1)] If $ (x^k, \lambda^k, \mu^k) $ is an approximate KKT point 
      of the GNEP: STOP.
   \item[(S.2)] Compute an approximate KKT point $ ( x^{k+1}, \mu^{k+1} ) $ 
      of the GNEP consisting of the minimization problems
      \begin{equation}\label{Eq:VariationalEquilibriaAlgorithmS2}
         \min_{x^{\nu}} \ L_a^{\nu} (x, u^k; \rho_k) \quad \st \quad h (x) \leq 0
      \end{equation}
      for each player $\nu=1,\ldots,N$.
   \item[(S.3)] Update the vector of multipliers to
      \begin{equation}\label{Eq:VariationalEquilibriaAlgorithmS3}
         \lambda^{k+1}=\left(u^k+\rho_k g^{\nu}(x^{k+1})\right)_+.
      \end{equation}
   \item[(S.4)] If
      \begin{equation}\label{Eq:VariationalEquilibriaAlgorithmS4}
         \big\| \min \{ - g (x^{k+1}), \lambda^{k+1} \} 
         \big\| \leq \tau \big\| \min \{ - g (x^k), 
         \lambda^k \} \big\|,
      \end{equation}
      then set $ \rho_{k+1} := \rho_k $. Else, set
      $ \rho_{k+1} := \gamma \rho_k $.
   \item[(S.5)] Set $ u^{k+1}=\min\{\lambda^{k+1},u^{\max}\}$, $ k 
      \leftarrow k+1 $, and go to (S.1).
\end{itemize}
\end{algorithm2}

\noindent
Clearly, \cref{Alg:VariationalEquilibriaAlgorithm} is nothing but a
special instance of \cref{Alg:GeneralAlgorithm}. Hence, the convergence
theory established in Section 4 remains valid. However, we can use the
fact that we have unified sequences (for both multipliers and penalty
parameters) to prove different convergence theorems. Before we do so,
we should revisit the subproblems which occur in Step 2. With the
understanding that we are looking for variational equilibria, it is 
natural to make the following assumption.

\begin{assumption}\label{Asm:VariationalEquilibria}
At Step 2 of \cref{Alg:VariationalEquilibriaAlgorithm}, we 
obtain $x^{k+1}\in\mathbb{R}^n$ and $\mu^{k+1}\in\mathbb{R}^p$ with
\begin{gather*}
   \left\|\nabla_{x^{\nu}} L_a^{\nu}(x^{k+1},u^k;\rho_k)+\nabla_{x^{\nu}}h(x^{k+1})
   \mu^{k+1}\right\|\le\varepsilon_k \\
   \quad \|\min\{-h(x^{k+1}),\mu^{k+1}\}\|\le\varepsilon'_k
\end{gather*}
for every $\nu$. Here, $(\varepsilon_k)\subset\mathbb{R}_+$ is bounded and 
$(\varepsilon'_k)\subset\mathbb{R}_+$ tends to zero.
\end{assumption}

\noindent
Note that \cref{Asm:VariationalEquilibria} is, essentially, a 
refined version of \cref{Asm:General}. The key difference is 
that $\mu^k$ is independent of the player index $\nu$.

We now turn to a brief convergence analysis for 
\cref{Alg:VariationalEquilibriaAlgorithm}. To this end, recall that
we have used the GNEP-CPLD constraint qualification for an analysis of \cref{Alg:GeneralAlgorithm}. Furthermore, the discussion in \Cref{Sec:CQs}
shows that, in general, this is a condition which is independent of CPLD.
Despite this fact, it turns out that we can use the classical CPLD as a
constraint qualification for \cref{Alg:VariationalEquilibriaAlgorithm}.

\begin{theorem}\label{Thm:FeasibilityVE}
Let $(x^k)$ be generated by \cref{Alg:VariationalEquilibriaAlgorithm} 
under \cref{Asm:VariationalEquilibria}, let $\bar{x}$ be a limit point
of $(x^k)$ and assume that $h$ satisfies CPLD in $\bar{x}$. Then
$\bar{x}$ is a global solution of
\begin{equation}\label{Eq:JointlyConvexFeasibilityOptP}
   \min\ \|g_+(x)\|^2 \quad\st\quad h(x)\le 0.
\end{equation}
In particular, if there are feasible points, then $\bar{x}$ is feasible.
\end{theorem}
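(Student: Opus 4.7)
The strategy is to exploit that \cref{Alg:VariationalEquilibriaAlgorithm} is an instance of \cref{Alg:GeneralAlgorithm} in which all players share the constraint data $(g,h)$, the same penalty parameter $\rho_k$, and the same multiplier $\mu^{k+1}$. I would first specialize \cref{Lem:Feasibility} to this setting. Tracing through its proof, the multipliers $\hat\mu^{\nu,k+1}$ are defined either as zero (when the penalty sequence is bounded) or as the quotient $\mu^{\nu,k+1}/\rho_{\nu,k}$ (when it is unbounded). Under the variational algorithm the case distinction itself is player-independent, and $\mu^{k+1}$ and $\rho_k$ are common to all $\nu$, so the resulting $\hat\mu^{k+1}$ is independent of $\nu$ as well. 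Hence, along a subsequence $K$ with $x^k \to \bar x$, we obtain for every player $\nu$ that
\begin{equation*}
\nabla_{x^{\nu}} \|g_+(x^{k+1})\|^2 + \nabla_{x^{\nu}} h(x^{k+1}) \hat\mu^{k+1} \to_K 0, \qquad \min\{-h(x^{k+1}), \hat\mu^{k+1}\} \to_K 0.
\end{equation*}
Stacking the partial-gradient conditions across $\nu$ yields the full-gradient relation $\nabla \|g_+(x^{k+1})\|^2 + \nabla h(x^{k+1}) \hat\mu^{k+1} \to_K 0$, so $(x^{k+1})_K$ is a sequence of approximate KKT points of the single optimization problem \eqref{Eq:JointlyConvexFeasibilityOptP}.

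With this in hand, I would view \eqref{Eq:JointlyConvexFeasibilityOptP} as a one-player GNEP and apply \cref{Thm:SequentialCPLDnu} with $N=1$; since $h$ is assumed to satisfy CPLD in $\bar x$, this yields that $\bar x$ is a KKT point of \eqref{Eq:JointlyConvexFeasibilityOptP}. Next, since each $g_i$ is convex, each $(g_i)_+$ is convex and nonnegative, so $((g_i)_+)^2$ is convex, and hence the objective $\|g_+\|^2 = \sum_i ((g_i)_+)^2$ is convex; together with the convex feasible set $\{x : h(x) \leq 0\}$ this makes \eqref{Eq:JointlyConvexFeasibilityOptP} a convex program, whose KKT points are global minimizers. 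Therefore $\bar x$ is a global solution of \eqref{Eq:JointlyConvexFeasibilityOptP}. Finally, if the GNEP admits a feasible point $\tilde x$, then $\|g_+(\tilde x)\|^2 = 0$ and $h(\tilde x) \leq 0$, so the optimal value of \eqref{Eq:JointlyConvexFeasibilityOptP} equals zero; this forces $g(\bar x) \leq 0$ and hence the feasibility of $\bar x$.

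The main difficulty lies in the first step: one must verify that the multipliers produced by \cref{Lem:Feasibility} can genuinely be chosen uniformly in $\nu$, since otherwise a GNEP-CPLD condition on $h$ (rather than classical CPLD) would be required in order to combine the $N$ player-wise approximate KKT systems into a single one. This player-uniformity is a direct consequence of the design of \cref{Alg:VariationalEquilibriaAlgorithm} and is precisely what allows the Feasibility GNEP to collapse to the convex program \eqref{Eq:JointlyConvexFeasibilityOptP} and the classical (rather than GNEP-tailored) CPLD to suffice.
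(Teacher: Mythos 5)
Your proposal is correct and takes essentially the same route as the paper's proof: both extract player-independent multipliers $\hat\mu^k$ from the proof of \cref{Lem:Feasibility} (possible precisely because $\rho_k$ and $\mu^{k+1}$ are shared across players), concatenate the player-wise conditions into a single approximate KKT system for \eqref{Eq:JointlyConvexFeasibilityOptP}, and conclude by CPLD plus convexity. The only cosmetic difference is that you invoke \cref{Thm:SequentialCPLDnu} with $N=1$ where the paper runs the CPLD boundedness-and-limit argument inline, and you spell out the player-uniformity of the multipliers that the paper merely asserts parenthetically.
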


\begin{proof}
Since $g$ and $h$ are assumed to be convex, it suffices to show that 
$\bar{x}$ is a KKT point of \eqref{Eq:JointlyConvexFeasibilityOptP}. 
To verify this, we can proceed as in the proof of \cref{Lem:Feasibility} 
and obtain a sequence $(\hat{\mu}^k)$ of multipliers such that
\begin{equation*}
   \nabla_{x^{\nu}}\|g_+(x^k)\|^2+\nabla_{x^{\nu}}h(x^k)
   \hat{\mu}^k\rightarrow_K 0 \quad \text{and} \quad
   \min\{-h(x^k),\hat{\mu}^k\}\rightarrow_K 0
\end{equation*}
for every $\nu$, where $K\subset\mathbb{N}$ is some appropriate subsequence
(note that the proof of \cref{Lem:Feasibility} shows that we can choose 
the same multipliers for each player). This implies
\begin{equation*}
   \nabla\|g_+(x^k)\|^2+\nabla h(x^k)\hat{\mu}^k\rightarrow_K 0 
   \quad \text{and} \quad
   \min\{-h(x^k),\hat{\mu}^k\}\rightarrow_K 0.
\end{equation*}
Since CPLD holds, we may assume without loss of generality that the
sequence $ \{ \hat{\mu}^k \} $ is bounded. Subsequencing if necessary,
we can therefore assume that $ \hat{\mu}^k \to_K \bar{\mu} $ for
some vector $ \bar{\mu} \in \mathbb R^p $. It then follows that
$ (\bar{x}, \bar{\mu}) $ is a KKT point of 
\eqref{Eq:JointlyConvexFeasibilityOptP}. Since this is a convex program
and there exist feasible points by assumption, the statement follows.
\end{proof}

\noindent
The proof of \cref{Thm:FeasibilityVE} clearly shows that we need the
multipliers $\hat{\mu}^k$ to be independent of $\nu$, a property which, 
in general, does not hold for the iterates generated by 
\cref{Alg:GeneralAlgorithm}. On the other hand, we do not require 
a special structure for the function $h$. In this sense, the theorem is 
actually much stronger than \cref{Thm:FeasibilityJC}.

We proceed by stating an optimality result akin to \cref{Thm:OptimalityCQ}.
Note that we do not need to explicitly assume the feasibility of the limit
point because of \cref{Thm:FeasibilityVE}.

\begin{theorem}\label{Thm:OptimalityVE}
Let $(x^k)$ be generated by \cref{Alg:VariationalEquilibriaAlgorithm} 
and $\bar{x}$ be a limit point of $(x^k)$. If $(x^k)$ satisfies 
\cref{Asm:VariationalEquilibria}, the constraints $g$ and $h$ 
permit feasible points and the function
\begin{equation*}
   x\mapsto
   \begin{pmatrix}
   g(x) \\
   h(x)
   \end{pmatrix}
\end{equation*}
satisfies CPLD in $\bar{x}$, then $\bar{x}$ is feasible and solves 
the GNEP.
\end{theorem}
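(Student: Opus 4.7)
The plan is to first establish feasibility of $\bar{x}$ and then to extract, via a Carath\'eodory reduction, KKT multipliers that are common to all players.

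For feasibility, CPLD of $(g,h)$ at $\bar{x}$ immediately implies CPLD of $h$ at $\bar{x}$: any positively linearly dependent subset of active gradients of $h$ is simultaneously such a subset for $(g,h)$, so linear dependence in a neighbourhood transfers. Hence \cref{Thm:FeasibilityVE} applies, and since feasible points exist by hypothesis we obtain $g(\bar{x})\le 0$; together with $h(\bar{x})\le 0$ (which follows from $\varepsilon'_k\to 0$ in \cref{Asm:VariationalEquilibria}), this shows that $\bar{x}$ is feasible.

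Next, along a subsequence $K$ with $x^k\to_K\bar{x}$, I would expand the augmented Lagrangian gradient and insert $\lambda^{k+1}=(u^k+\rho_k g(x^{k+1}))_+$ to rewrite \cref{Asm:VariationalEquilibria} as
\begin{equation*}
\nabla_{x^{\nu}}\theta_{\nu}(x^{k+1}) + \nabla_{x^{\nu}} g(x^{k+1})\lambda^{k+1} + \nabla_{x^{\nu}} h(x^{k+1})\mu^{k+1} \to_K 0
\end{equation*}
for every $\nu$, with $(\lambda^{k+1},\mu^{k+1})$ common across players. Arguing as in \cref{Lem:Optimality}, one also obtains $\lambda_i^{k+1}\to_K 0$ whenever $g_i(\bar{x})<0$ and $\mu_j^{k+1}\to_K 0$ whenever $h_j(\bar{x})<0$. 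Stacking the $N$ gradient relations yields
\begin{equation*}
F(x^{k+1}) + \nabla g(x^{k+1})\lambda^{k+1} + \nabla h(x^{k+1})\mu^{k+1} \to_K 0,
\end{equation*}
where $F(x)=(\nabla_{x^1}\theta_1(x),\ldots,\nabla_{x^N}\theta_N(x))$ is the pseudo-gradient of the GNEP and $\nabla g,\nabla h$ are the full transposed Jacobians.

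Finally, I would apply a Carath\'eodory reduction to this stacked system, replacing $(\lambda^{k+1},\mu^{k+1})$ by nonnegative sequences $(\hat\lambda^{k+1},\hat\mu^{k+1})$ supported on a subset of active indices whose corresponding full gradients of $(g,h)$ are linearly independent at $x^{k+1}$. A division-by-norm argument together with the classical CPLD of $(g,h)$ at $\bar{x}$ — exactly as in the proof of \cref{Thm:SequentialCPLDnu} — then shows that $(\hat\lambda^{k+1},\hat\mu^{k+1})$ stays bounded, so a further sub-subsequence converges to multipliers $(\bar\lambda,\bar\mu)$ common to every player that satisfy each player's KKT system at $\bar{x}$. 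Joint convexity makes these conditions sufficient, so $\bar{x}$ solves the GNEP (in fact as a variational equilibrium). The main obstacle is that the Carath\'eodory reduction must be carried out on the stacked, full-gradient system rather than separately for each player, so that the reduced multipliers remain independent of $\nu$; this is precisely why the hypothesis uses the classical CPLD on the concatenated function $(g,h)$ instead of GNEP-CPLD.
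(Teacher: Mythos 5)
Your proposal is correct and follows essentially the same route as the paper: deduce CPLD of $h$ from CPLD of $(g,h)$ and invoke \cref{Thm:FeasibilityVE} for feasibility, then use the asymptotic KKT conditions of \cref{Lem:Optimality} with player-independent multipliers, concatenate the $N$ stationarity relations into the pseudo-gradient system, and apply the classical sequential-CPLD (Carath\'eodory reduction plus normalization) argument of \cref{Thm:SequentialCPLDnu} to the stacked system. The paper compresses the last step into ``concatenating these systems for every $\nu$ and using CPLD''; your expansion, including the observation that the reduction must act on the full stacked gradients so the multipliers stay independent of $\nu$, and that joint convexity upgrades the resulting KKT point to a solution, is exactly what that step means.
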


\begin{proof}
Under the given assumptions, it is clear that $h$ itself also satisfies CPLD. 
Hence, by \cref{Thm:FeasibilityVE}, $\bar{x}$ is feasible. 
Furthermore, \cref{Lem:Optimality} gives us the asymptotic conditions
\begin{gather*}
   \nabla_{x^{\nu}}\theta_{\nu}(x^k)+\nabla_{x^{\nu}}g(x^k)\lambda^k+
   \nabla_{x^{\nu}} h(x^k)\mu^k\to_K 0 \\
   \min\{-g(x^k),\lambda^k\}\to_K 0,\quad \min\{-h(x^k),\mu^k\}\to_K 0.
\end{gather*}
for every $\nu$. The result then follows by concatenating these systems 
for every $\nu$ and using CPLD.
\end{proof}

\noindent
The above results are particularly interesting because the classical CPLD 
is a more amenable condition than GNEP-CPLD. For example, we have the 
well-known chain of implications
$   \text{Slater}\implies\text{MFCQ}\implies\text{CPLD} $, 
which allows us to use the (easily verifiable) Slater condition as a CQ 
for jointly-convex GNEPs.

Before we conclude this section, we would like to point out another 
property of \cref{Alg:VariationalEquilibriaAlgorithm}. The augmented
Lagrangian for player $\nu$ is given by
\begin{equation*}
   L_a^{\nu}(x,u;\rho)=\theta_{\nu}(x)+\frac{\rho}{2}\left\|\left(g(x)+
   \frac{u}{\rho}\right)_+\right\|^2.
\end{equation*}
Clearly, the second term is independent of $\nu$. This allows us to 
decompose the augmented Lagrangian in the following way:
\begin{equation*}
   L_a^{\nu}(x,u;\rho)=\theta_{\nu}(x)+P(x,u;\rho),
\end{equation*}
where $P$ is a convex penalty term which is independent of $\nu$. This 
decomposition is useful when designing methods for the solution of the 
subproblems. For instance, it is well-known that a critical property of 
(jointly-convex) GNEPs is the monotonicity of the function
\begin{equation*}
   F(x)=
   \begin{pmatrix}
   \nabla_{x^1}\theta_1(x) \\
   \vdots \\
   \nabla_{x^N}\theta_N(x)
   \end{pmatrix}.
\end{equation*}
When adding a convex penalty term to the functions $\theta_{\nu}$, 
it is easy to see that this property is preserved.

\section{Implementation and Numerical Results}\label{Sec:Numerics}

In this section, we present some empirical results to showcase the convergence of our method(s). To this end, we implement \cref{Alg:GeneralAlgorithm} in MATLAB\textsuperscript{\textregistered} and, for the sake of simplicity, we solve every problem by performing a full penalization. This is especially attractive because many of the convergence theorems (e.g.\ \ref{Thm:FeasibilityCQ} and \ref{Thm:FeasibilityJC}) hold without any further assumptions.

The test suite we use is identical to the one from \cite{Facchinei2010a}. For every problem, we use the same parameters $u^{\max}=10^6$ and $\rho_{\nu,0}=1$ for every $\nu$. The remaining parameters are chosen depending on the size of the problem:
\begin{align*}
	\tau_{\nu}=0.1, \quad \gamma_{\nu}=10, \qquad & \text{if}~n\le 100; \\
	\tau_{\nu}=0.5, \quad \gamma_{\nu}=2, \qquad  & \text{if}~n>100.
\end{align*}
This represents a quite aggressive penalization for small problems and a more cautious scheme for large problems. We have found this distinction to be very efficient for our problem set. For the computation of the initial multipliers $\lambda^{\nu,0}$ (and $u^{\nu,0}$, which we set to the same value), we recall the KKT conditions for player $\nu$, which can be stated as
\begin{equation*}
	\nabla_{x^{\nu}}\theta_{\nu}(x^0)+\nabla_{x^{\nu}}g^{\nu}(x^0)\lambda^{\nu,0}=0 \quad
	\text{and} \quad \min\{-g^{\nu}(x^0),\lambda^{\nu,0}\}=0.
\end{equation*}
We now solve the first condition in a least-squares sense by setting $\lambda_i^{\nu,0}=0$ for every $i$ with $g_i^{\nu}(x^0)<0$ and using the MATLAB\textsuperscript{\textregistered} function \texttt{lsqnonneg} to compute a nonnegative least-squares solution of
\begin{equation*}
	\nabla_{x^{\nu}}\theta_{\nu}(x^0)+\nabla_{x^{\nu}}g^{\nu}(x^0)\lambda^{\nu,0}=0.
\end{equation*}
Finally, the overall stopping criterion we use is
\begin{equation*}
	\|\nabla_{x^{\nu}}\theta_{\nu}(x)+\nabla_{x^{\nu}}g^{\nu}(x)\lambda^{\nu}\|_{\infty} \le\varepsilon,
	\quad \|g_+^{\nu}(x)\|_{\infty}\le\varepsilon,
	\quad \text{and} \quad |g^{\nu}(x)^T \lambda^{\nu}|\le\varepsilon
\end{equation*}
for every $\nu$. Here, $\varepsilon$ is some prescribed stopping tolerance which we set to $10^{-8}$.

\subsection{Solution of the subproblems}

Since we perform a full penalization, the subproblems which occur at Step 2 of \cref{Alg:GeneralAlgorithm} are unconstrained NEPs where player $\nu$'s optimization problem is given by
\begin{equation*}
	\min_{x^{\nu}} \ L_a^{\nu}(x,u^{\nu,k};\rho_{\nu,k}).
\end{equation*}
Hence, we simply solve these problems by considering the nonlinear equation
\begin{equation}\label{Eq:SubproblemF}
	F(x)=
	\begin{pmatrix}
		\nabla_{x^1} L_a^1(x,u^{1,k};\rho_{1,k}) \\
		\vdots \\
		\nabla_{x^N} L_a^N(x,u^{N,k};\rho_{N,k})
	\end{pmatrix}
	\stackrel{!}{=}0.
\end{equation}
In principle, we could use any general-purpose nonlinear equation solver to solve this equation. However, it should be noted that $F$ is, in general, a semismooth function with non-isolated solutions. Hence, special care needs to be taken when selecting an algorithm. For instance, the classical semismooth Newton method \cite{Qi1999,Qi1993} typically does not exhibit (locally) superlinear convergence for such problems, whereas more sophisticated methods such as Levenberg-Marquardt methods \cite{Fan2005,Yamashita2001} or the LP-Newton method \cite{Facchinei2014} are known to be more efficient under certain assumptions. For our numerical testing, we decided to employ a Levenberg-Marquardt type algorithm from \cite{Fan2005} where the basic step $d$ is given by
\begin{equation*}
\left(J(x)^T J(x)+\alpha(x)I\right)d=-J(x)^T F(x).
\end{equation*}
Here, $J(x)$ is some suitable (generalized) Jacobian of $F$ and $\alpha(x)=\|F(x)\|$. In order to improve the global convergence properties of this method, we have decided to combine it with a classical Levenberg-Marquardt parameter updating scheme, i.e.\ we consider the equation
\begin{equation*}
\left(J(x)^T J(x)+\alpha \|F(x)\| I\right)d=-J(x)^T F(x)
\end{equation*}
and iteratively update $\alpha$ (in a heuristic manner) based on the success of the last step. A precise statement of the algorithm is as follows.

\begin{algorithm2}(Levenberg-Marquardt type method for $F$)
	 \begin{itemize}
	 	\item[(S.0)] Let $x^0\in\mathbb{R}^n$, $\alpha_0=1$, $\varepsilon>0$, and set $k=0$.
	 	\item[(S.1)] If $\|F(x^k)\|\le\varepsilon$ holds: STOP.
	 	\item[(S.2)] Choose $V_k\in\partial F(x^k)$ and compute $d^k$ by solving
	 	\begin{equation}\label{Eq:LevenbergMarquardtEq}
	 		(V_k^T V_k+\alpha_k\|F(x^k)\|I)d^k=-V_k^T F(x^k).
	 	\end{equation}
	 	If $\|F(x^k+d^k)\|<\|F(x^k)\|$, set $\alpha_{k+1}=0.1\alpha_k$ and go to (S.4).
	 	\item[(S.3)] Iteratively set $\alpha_k\leftarrow 10\alpha_k$ and re-compute $d^k$ as given by \eqref{Eq:LevenbergMarquardtEq} until $\|F(x^k+d^k)\|<\|F(x^k)\|$. Finally, set $\alpha_{k+1}=\alpha_k$.
	 	\item[(S.4)] Set $x^{k+1}=x^k+d^k$, $k\leftarrow k+1$, and go to (S.1).
	 \end{itemize}
\end{algorithm2}

\noindent
Note that we use the same tolerance $\varepsilon=10^{-8}$ as given at the beginning of this section. Furthermore, since $F$ is only semismooth, the above is not a globally convergent algorithm. In fact, the loop in (S.3) does not necessarily terminate finitely if the current point $x^k$ is one where $F$ is not differentiable. To safeguard against this case, we terminate the loop in (S.3) if
\begin{equation*}
	\|d\|<\frac{\varepsilon}{\|V^k\|_F}.
\end{equation*}
Despite the necessity of such safeguarding techniques, we have found the above method to be sufficient for nearly all our examples.

\subsection{Numerical Results}

We now present our results. For a given problem, $N$ denotes the number of players, $n$ is the total number of variables, $k$ is the number of outer iterations, $i_{total}$ is the accumulated number of inner iterations and F denotes a failure. We also include certain values which measure the feasiblity, optimality and complementarity at the solution. These are denoted $R_f$, $R_o$ and $R_c$, respectively. The values are calculated as follows:
\begin{align*}
	R_f & = \max_{\nu=1,\ldots,N} \|g_+^{\nu}(x)\|_{\infty} \\
	R_o & = \max_{\nu=1,\ldots,N} \|\nabla_{x^{\nu}}\theta_{\nu}(x)+\nabla_{x^{\nu}}g^{\nu}(x)\lambda^{\nu}\|_{\infty} \\
	R_c & = \max_{\nu=1,\ldots,N} |g^{\nu}(x)^T \lambda^{\nu}|.
\end{align*}

\begin{table}\centering
\caption{Numerical results for \cref{Alg:GeneralAlgorithm}.}
\begin{tabular}{|lllc|cccccc|}\hline
	Example & $N$ & $n$ & $x^0$ & $k$ & $i_{total}$ & $R_f$ & $R_o$ & $R_c$ & $\rho_{\max}$ \\ \hline

	A.1   & 10 &  10 & 0.01 &   7 &    20 & 1.5e-10 & 8.9e-16 & 4.2e-11 &   100 \\
	      &    &     &  0.1 &   6 &    13 &   8e-09 & 5.9e-13 & 2.1e-09 &   100 \\
	      &    &     &    1 &   7 &    19 & 1.5e-10 & 2.9e-16 & 4.2e-11 &   100 \\ \hline
	
	A.2   & 10 &  10 & 0.01 &   9 &   108 & 4.7e-09 & 2.3e-09 & 1.4e-09 &  1000 \\
	      &    &     &  0.1 &   8 &    70 & 2.9e-09 & 4.1e-14 & 2.9e-11 &   100 \\
	      &    &     &    1 &  10 &   192 & 4.9e-10 & 5.6e-14 & 1.5e-10 & 1e+05 \\ \hline
	
	A.3   &  3 &   7 &    0 &   1 &     4 &       0 &   1e-09 &       0 &     1 \\
	      &    &     &    1 &   1 &     5 &       0 & 3.6e-15 &       0 &     1 \\
	      &    &     &   10 &   1 &     5 &       0 & 1.7e-10 &       0 &     1 \\ \hline
	
	A.4   &  3 &   7 &    0 &  12 &    63 & 2.6e-11 & 1.5e-09 & 2.6e-09 & 1e+04 \\
	      &    &     &    1 &   0 &     0 &       0 &       0 &       0 &     1 \\
	      &    &     &   10 &  11 &   202 & 2.5e-12 & 4.6e-10 & 7.4e-10 & 1e+04 \\ \hline
	
	A.5   &  3 &   7 &    0 &   8 &    20 &   2e-10 & 1.7e-13 & 4.8e-10 &  1000 \\
	      &    &     &    1 &   8 &    20 & 3.5e-10 & 4.9e-13 & 8.3e-10 &  1000 \\
	      &    &     &   10 &  10 &    27 & 6.9e-09 &   1e-13 & 6.2e-09 &  1000 \\ \hline
	
	A.6   &  3 &   7 &    0 &  14 &    68 & 1.9e-11 & 6.6e-10 & 4.2e-09 & 1e+04 \\
	      &    &     &    1 &  11 &    92 & 9.8e-12 & 4.5e-09 & 5.1e-09 & 1e+04 \\
	      &    &     &   10 &  14 &    82 & 1.9e-11 & 6.6e-10 & 4.2e-09 & 1e+04 \\ \hline
	
	A.7   &  4 &  20 &    0 &  13 &    35 & 6.6e-12 & 1.7e-11 & 2.3e-09 & 1e+04 \\
	      &    &     &    1 &  12 &    39 & 1.1e-11 & 1.4e-11 & 3.8e-09 & 1e+04 \\
	      &    &     &   10 &  12 &    52 & 4.1e-12 & 1.2e-11 & 1.7e-09 & 1e+05 \\ \hline
	
	A.8   &  3 &   3 &    0 & F & & & & &  \\
	      &    &     &    1 &   1 &     4 & 4.9e-11 & 4.9e-11 & 4.9e-11 &     1 \\
	      &    &     &   10 &   3 &    14 & 4.5e-12 & 4.9e-12 & 4.5e-12 &    10 \\ \hline
	
	A.9a  &  7 &  56 &    0 &   9 &    46 & 2.3e-09 &   8e-15 & 7.6e-09 &    10 \\
	A.9b  &  7 & 112 &    0 &  26 &    75 & 2.8e-10 &   1e-14 & 2.7e-09 &    16 \\ \hline
	
	A.10a &  8 &  24 & see \cite{Facchinei2010a} &  11 &   243 & 9.8e-13 & 4.5e-11 & 4.5e-12 & 1e+05 \\
	A.10b & 25 & 125 & see \cite{Facchinei2010a} &  19 &  2519 & 6.7e-10 & 1.8e-11 & 4.9e-09 &    64 \\
	A.10c & 37 & 222 & see \cite{Facchinei2010a} &  40 &  3658 & 7.2e-13 & 9.3e-12 & 1.6e-09 & 5e+05 \\
	A.10d & 37 & 370 & see \cite{Facchinei2010a} &  19 &  2527 & 2.9e-11 & 2.3e-12 & 3.1e-10 &   256 \\
	A.10e & 48 & 576 & see \cite{Facchinei2010a} &  18 &  4048 & 1.2e-10 & 7.1e-12 & 1.5e-09 &   256 \\ \hline
	
	A.11  &  2 &   2 &    0 &   9 &    17 & 6.4e-09 & 2.9e-15 & 3.2e-09 &    10 \\ \hline
	
	A.12  &  2 &   2 &(2,0) &   1 &     5 &       0 & 8.9e-16 &       0 &     1 \\ \hline
	
	A.13  &  3 &   3 &    0 &   4 &    20 & 3.3e-09 & 7.6e-12 & 1.9e-09 &     1 \\ \hline
	
	A.14  & 10 &  10 & 0.01 &   1 &     8 &       0 & 8.2e-14 &       0 &     1 \\ \hline
	
	A.15  &  3 &   6 &    0 &   1 &     7 &       0 & 2.8e-14 &       0 &     1 \\ \hline
	
	A.16a &  5 &   5 &   10 &  10 &    26 & 1.3e-10 &   6e-14 & 3.7e-09 &    10 \\
	A.16b &  5 &   5 &   10 &   9 &    26 & 6.1e-11 & 3.6e-15 & 1.1e-09 &    10 \\
	A.16c &  5 &   5 &   10 &   7 &    23 &   9e-10 & 1.5e-13 & 6.4e-09 &    10 \\
	A.16d &  5 &   5 &   10 &   9 &    24 &   4e-09 & 2.1e-14 & 1.9e-09 &     1 \\ \hline
	
	A.17  &  2 &   3 &    0 &   8 &    20 & 4.5e-11 & 3.4e-13 & 1.1e-10 &   100 \\ \hline
	
	A.18  &  2 &  12 &    0 &   9 &    34 & 1.3e-11 & 1.1e-11 & 2.4e-10 &  1000 \\
	      &    &     &    1 &   9 &    34 & 1.3e-11 & 1.2e-11 & 2.4e-10 &  1000 \\
	      &    &     &   10 &   9 &    32 & 1.3e-11 & 1.8e-11 & 2.4e-10 &  1000 \\ \hline
	
\end{tabular}
\end{table}

\noindent
Clearly, some remarks are in order:

\begin{enumerate}
	\item With the exception of problem A.8, the augmented Lagrangian method was able to solve every problem quite efficiently. It is particularly noteworthy that the method achieves a very high accuracy, typically in the region of $10^{-10}$. This compares quite favourably to other methods for GNEPs, such as the interior-point method from \cite{Dreves2011} or the exact penalty method from \cite{Facchinei2010a}.
	\item We have also tried the algorithm with different choices of $u^{\max}$. Recall that, for $u^{\max}=0$, the algorithm is essentially a quadratic penalty method. The following table lists some values for $u^{\max}$ and corresponding failure numbers.
	\begin{center}
	\begin{tabular}{r|ccccc}
		$u^{\max}$ & 0  & 10 & $10^2$ & $10^4$ & $10^6$ \\ \hline
		failures   & 29 & 18 & 10     & 1      & 1
	\end{tabular}
	\end{center}
	\item For most problems, the stopping accuracy tends to have little effect on the speed of the algorithm. A notable exception is problem A.2, where we observed significantly lower (by a factor of 3) iteration numbers when using a tolerance of $10^{-4}$. We suspect that this is a consequence of the very narrow feasible set in this problem (see \cite{Facchinei2010a}).
	\item Clearly, the overall speed of the algorithm crucially depends on how quickly the subproblems are solved. In this regard, the Levenberg-Marquardt algorithm seems to greatly benefit from the (semi-)smoothness of the function $F$ from \eqref{Eq:SubproblemF}. We investigated some of the problems on a sample basis and found that the Levenberg-Marquardt method appears to be superlinearly convergent for all of them. Despite this, we believe that there is a lot of room for improvement here.
	\item Another factor which greatly affects the performance of the algorithm is the choice of the parameters which handle the multipliers and penalty parameters. In this regard, our choices are quite simple and straightforward. However, for some problems, we observed that fine-tuning the parameters can yield a significant speed improvement.
	\item For problem A.8 with the starting point $x^0=0$, the subproblem algorithm is unable to compute a solution and, hence, the overall iteration breaks down. Another peculiarity of problem A.8 is that, for a suitable choice of parameters, one can get the algorithm to converge to the infeasible point $\bar{x}=(1.5,0,2)$. This point (together with its corresponding multipliers) satisfies the stationarity part of the KKT conditions, but (due to the infeasibility) is not a solution of the GNEP. Furthermore, one can easily verify that $\bar{x}$ is a solution of the Feasibility GNEP \eqref{Eq:FeasibilityGNEP}, as suggested by \cref{Thm:FeasibilityCQ}, but GNEP-EMFCQ does not hold in $\bar{x}$. This shows that the assertions of \cref{Thm:FeasibilityCQ} can, in general, not be sharpened.
\end{enumerate}

\section{Final Remarks}\label{Sec:Final}

We have introduced an augmented Lagrangian method for the solution of generalized Nash equilibrium problems. Our method is quite flexible in the sense that it allows partial penalization of constraints and can be modified for the computation of variational equilibria of jointly-convex GNEPs. The numerical testing we have done indicates that the method works quite well in practice, since it possesses good global convergence properties and easily achieves a very high accuracy, provided the problem is sufficiently well-behaved.

It should be noted that there are still many aspects which might lead to substantial numerical improvements. Aside from the fine-tuning of iteration parameters, a more detailed analysis of the subproblems which occur in our method might lead to insights on their solution. In this regard, it would be interesting to analyse whether the subproblems satisfy certain regularity conditions such as an error-bound to the solution set \cite{Dreves2014,Izmailov2014} or how other methods such as smoothing Newton methods \cite{Qi1999} could be incorporated into the solution process. Further possible extensions of the ALM are second-order multiplier iterations or approaches such as the exponential method of multipliers, cf.\ \cite{Bertsekas1982}.

On another note, the theoretical analysis of our algorithm has uncovered a series of properties and concepts which extend the rich theoretical background of augmented Lagrangian methods to the field of GNEPs. For instance, the constraint qualifications introduced in \Cref{Sec:CQs} (one of which has previously been used in the literature) are very general and hence, we hope, they will find applications in the context of other methods for multi-player games.

The same goes for our notion of the {\em Feasibility GNEP}, which is a new optimality concept for GNEPs that offers a very clear insight on the behaviour of the augmented Lagrangian method. This is a generalization of a corresponding concept for classical optimization problems, cf.\ \cite{Birgin2014}, which has enjoyed a variety of applications, e.g.\ in the context of Sequential Quadratic Programming (SQP) methods in \cite{Burke1989}. A natural continuation of this idea would be an SQP-type method for GNEPs, which we envision as a possible path for future research.

\bibliographystyle{siamplain}
\bibliography{references}

\end{document}